\newtheorem{lemma}{Lemma}[section]
\newtheorem{proposition}[lemma]{Proposition}
\newtheorem{theorem}[lemma]{Theorem}
\newtheorem{corollary}[lemma]{Corollary}
\newcommand{\CC}{\mathbb C}
\newcommand{\HH}{\mathbb H}
\newcommand{\NN}{\mathbb N}
\newcommand{\QQ}{\mathbb Q}
\newcommand{\ZZ}{\mathbb Z}
\newcommand{\latt}[1]{{\langle{#1}\rangle}}
\newcommand{\qedsymbol}{\mbox{$\Box$}}
\newcommand{\qed}{\unskip\nobreak\hfil\penalty50\hskip1em\hbox{}\nobreak
\hfill\qedsymbol\parfillskip=0pt\finalhyphendemerits=0}
\newenvironment{proof}{\begin{ProofwCaption}{Proof}}{\end{ProofwCaption}}
\newenvironment{ProofwCaption}[1]
{\addvspace\theorempreskipamount \noindent{\it #1.}\rm}
{\qed \par \addvspace\theorempostskipamount}
\begin{document}
\title{Powers of Jacobi triple product, Cohen's numbers and the Ramanujan $\Delta$-function}
\author{Valery  Gritsenko
and  Haowu Wang\footnote{The study has been funded by the Russian Academic Excellence Project '5-100'. The second author was supported by Labex CEMPI, Lille.}}
\date{September 16, 2017}
\maketitle

\begin{abstract}
We show that the eighth power of the Jacobi triple product is a Jacobi--Eisenstein series of weight $4$ and index $4$ and we calculate its Fourier coefficients. As applications we obtain explicit formulas for the eighth  powers of theta-constants  of arbitrary order and the Fourier coefficients of the Ramanujan Delta-function
$\Delta(\tau)=\eta^{24}(\tau)$,
$\eta^{12}(\tau)$ and $\eta^{8}(\tau)$ 
in terms of Cohen's numbers $H(3,N)$ and $H(5,N)$.
We give new formulas for the number of representations of integers as sums of eight higher figurate numbers.
We also calculate the sixteenth and the twenty-fourth powers of the Jacobi theta-series using the basic Jacobi forms.

\noindent
{\it Keywords}: Jacobi forms (11F50),  Theta functions (14K25 ), Fourier coefficients of automorphic forms (11F30 ),  Representation problems (11D85).
\end{abstract}

\section{Introduction}

In 1829, Jacobi found in \cite{J} explicit formulas for the number of 
representations by four, six and eight squares using  the theory of elliptic 
functions.
To do this he defined the theta-constants of order $2$ ($q=e^{2\pi i\tau}$)
$$
\theta_{00}(\tau)=\sum_{n=-\infty}^{+\infty}q^{\frac{n^2}{2} }, \quad\theta_{01}(\tau)= \sum_{n=-\infty}^{+\infty}(-1)^nq^{\frac{n^2}{2}},
$$
$$ 
\theta_{10}(\tau)=\sum_{n=-\infty}^{+\infty}q^{\frac{(2n+1)^2}{8}}
=2q^{\frac{1}{8}}\sum_{n\geq 0}q^{\frac{n(n+1)}{2}}.
$$
Then 
$$
\theta_{00}^k(2\tau)=1+\sum_{n=1}^{\infty}r_k(n)q^n, \quad \theta_{01}^k(2\tau)=1+\sum_{n=1}^{\infty}(-1)^nr_k(n)q^n
$$
and
$$
\theta_{10}^k(\tau)=2^kq^{\frac{k}{8}}\sum_{n=1}^{\infty}\delta_k(n)q^n
$$
where  $r_k(n)$ is the number of representations of $n$ as a sum of $k$ squares
and  $\delta_k(n)$ the number of representations of $n$ as a sum of $k$ triangular numbers, i.e. the number of non-negative $k$-tuples of integers 
$(x_1 ,\cdots, x_k )$ such that
$\frac{x_1(x_1+1)}{2}+\cdots +\frac{x_k(x_k+1)}{2}=n$.

Nowadays one knows that $\theta_{01}^8(2\tau)$ and $\theta_{10}^8(\tau)$ are modular forms of weight $4$ with respect to  the modular group $\Gamma_0(2)$. 
The graded ring of the $\Gamma_0(2)$-modular forms is polynomial. 
More exactly, it is generated by two Eisenstein series of weight $2$ and $4$
$$
M_*(\Gamma_0(2))=\bigoplus_{k\geq 0}M_{k}(\Gamma_0(2))=\CC[\varepsilon_2,\;\varepsilon_4]
$$
where $\varepsilon_2$ can be expressed using the quasi-modular Eisenstein series $E_2$
$$
\varepsilon_2(\tau)=2E_2(2\tau)-E_2(\tau)
=1+24\sum_{n\geq 1} \bigl(\sum_{d\vert n,\, d>0, \, {\rm odd}}d\,\bigr)\, q^n
$$
or as a special value of the Weierstrass $\wp$-function 
$\varepsilon_2(\tau)=-6\wp(\tau,\frac{1}2)$.

If 
$E_4(\tau)=1+240\sum_{n\geq 1}\sigma_3(n)q^n\in M_4(SL_2(\ZZ))$ then 
$M_4(\Gamma_0(2))$ is generated by $E_4(\tau)$ and  $E_4(2\tau)$. 
By comparing the first Fourier coefficients, we get 
$$ 
\theta_{01}^8(2\tau)=-\frac{1}{15}E_4+\frac{16}{15}E_4(2\tau), \quad 2^{-8}\theta_{10}^8(\tau)=\frac{1}{240}E_4-\frac{1}{240}E_4(2\tau).
$$
In particular we obtain the {\bf Jacobi formulas} (1829):
$$
r_8(n)=16\sum_{d\vert n, d>0}(-1)^{n+d}d^3, 
\qquad \delta_8(n)=\sum_{d\vert (n+1), \frac{n+1}{d}\, {\rm odd}} d^3.
$$

There are different generalisations of this classical subject. See, for example,
the paper \cite{Mi} of  S.~C.~Milne  who used analytical methods, and  
the paper \cite{ORW} of  K.~Ono, S.~Robins, P.T.~Wahl related to  the theory of  modular forms of half integral weights. 
Our results are based on the theory of Jacobi modular forms.

The three theta-constants written above are special values of the odd Jacobi theta-series $\vartheta$ of order $2$ (see \cite{Mu})
\begin{equation}\label{vartheta}
\vartheta(\tau,z)=
q^{\frac 1{8}}\zeta^{\frac1{2}}\sum_{n\in \ZZ}(-1)^{n}q^{\frac{n(n+1)}{2}}\zeta^{n}=
\sum_{n \in \ZZ}\  \bigl( \frac{-4}{n}\bigr)\, q^{\frac{n^2}{8}} \zeta^{\frac{n}{2}}
\end{equation}
where $\tau\in \HH=\{\tau\in\mathbb{C},\ {\rm Im}\,\tau>0\}$, $z\in \CC$, $q=\exp(2\pi i\, \tau)$,
$\zeta=\exp(2\pi i\, z)$ and $\bigl( \frac{-4}{n}\bigr)$ is the Kronecker symbol.
We note that
\begin{equation}\label{theta01} 
\vartheta(\tau,\frac{1}{2})=i\theta_{10},\quad 
\vartheta(\tau,\frac{\tau}{2})=-q^{-\frac{1}{8}} \theta_{01},\quad
\vartheta(\tau,\frac{\tau +1}{2})=iq^{-\frac{1}{8}}\theta_{00}.
\end{equation}
The Jacobi triple product formula
$$
\vartheta(\tau,z) = 
 -q^{\frac{1}{8}} \zeta^{-\frac{1}{2}} \prod_{n\geq 1} (1-q^{n-1}\zeta)(1-q^n \zeta^{-1})(1-q^n) 
$$
reflects the fact that $\vartheta(\tau,z)$ is the Kac--Weyl denominator function of
the simplest affine Lie algebra (see \cite{K}). This function was the basic Jacobi 
modular form in the classification of reflective Borcherds automorphic products 
and corresponding Lorentzian Kac--Moody algebras of Borcherds type in \cite{GN}.
Using the basic theta-series  $\vartheta(\tau,z)$ 
one can construct a lot of important examples of Jacobi modular forms in many variables 
and holomorphic Borcherds products
(see \cite{CG1}--\cite{CG2} and \cite{GPY}).

{\it In this paper we get the following generalisations of the Jacobi formulas
for $\theta_{00}^8$, $\theta_{10}^8$ $\theta_{01}^8$.} 
\smallskip

{\bf (1)} The eighth power of the Jacobi triple product
$$
\vartheta(\tau,z)^8=q\, \zeta^{-4}
\prod_{n\geq 1} (1-q^{n-1}\zeta)^8(1-q^n \zeta^{-1})^8(1-q^n)^8
$$
is a Jacobi--Eisenstein series. Its Fourier coefficients are expressed in terms 
of the Cohen numbers $H(3,N)$ (see Theorem \ref{Ftheta8}).
\smallskip

{\bf (2)} A similar result is true for the product $\wp(\tau,z)\vartheta(\tau,z)^8$
with the Weierstrass $\wp$-function (see Theorem \ref{Ftheta8}). Its Fourier coefficients are expressed in terms  of the Cohen numbers $H(5,N)$. 
\smallskip 

{\bf (3)} As applications we obtain a formula for the eighth power of theta-constants $\theta_{a,b}(\tau)$ of arbitrary order $N$ in terms of rational linear combinations of Cohen numbers multiplied by small powers of integers (\S 3.2).
This approach gives new formulas for the number of
representations of integers as sums of eight {\it higher figurate numbers}
with arbitrary parameter (see \S 3.3).
Moreover, we find new formulas for the Fourier coefficients 
$\tau(n)$ of the Ramanujan 
$\Delta$-function in terms of Cohen's numbers $H(3,N)$ and $H(5,N)$ and for 
the Fourier coefficients of $\eta^8(\tau)$, $\eta^{12}(\tau)$ 
(see \S 3.2, \S 3.4,  Lemma 2.1 and \S 4.1). 
\smallskip

{\bf (4)} We calculate explicitly cusp corrections of the powers
$\vartheta^{16}(\tau,z)$ and $\vartheta^{24}(\tau,z)$ and the products 
$\wp(\tau,z)^m\vartheta(\tau,z)^8$ for $m=2$, $3$ and $4$
(see \S 4.2 and \S 4.3). 
It gives new formulas for $\theta_{10}^{16}$ and  $\theta_{10}^{24}$.
\medskip

The subject of this work is closely related to the course of lectures 
``{\it Jacobi modular forms: 30 ans apr\`es}''
(see \cite{GJ}) given by  the first author in the Laboratory of Algebraic Geometry of  the Department of Pure Mathematics of HSE in Moscow in 2015--2016. 
In this course, an explicit formula for the eighth power of the Jacobi triple product was mentioned as a possible application.  
We express our  gratitude to all participants of the course, 
and especially to Dimitry Adler who was a research assistant of the course, and Guillaume Bioche who prepared a tex file of the lectures.

The authors are grateful to reviewers for many useful comments.

\section{Jacobi modular forms}\label{S-Jmf}

Consider a holomorphic function 
$\varphi:\mathbb{H}\times\mathbb{C}\rightarrow\mathbb{C}$.
Let  $k\in\mathbb{Z}$ be the weight and $ m\in\mathbb{N}$ the index.
We state the following two functional equations with 
$M=\left(\begin{smallmatrix} a & b \\ c & d \end{smallmatrix}\right)
\in SL_2(\mathbb{Z})$ and $\lambda,\mu\in\mathbb{Z}$:
\begin{align}
\varphi(\tau,z)=&
(c\tau+d)^{-k}e^{-2i\pi m\frac{cz^2}{c\tau+d}}
\varphi\left(\frac{a\tau+b}{c\tau+d},\frac{z}{c\tau+d}\right)
=(\varphi|_{k,m}[M])(\tau,z), 
\label{slash1}\\ 
\varphi(\tau,z)=&
e^{2i\pi m(\lambda^2\tau+2\lambda z)}\varphi(\tau,z+\lambda\tau+\mu)=
(\varphi|_{k,m}
\left([\begin{smallmatrix}\  \mu\\ -\lambda\end{smallmatrix};0]\right)
)(\tau,z) 
\label{slash2}
\end{align}
where we use the standard Jacobi slash operator ${}|_{k,m}$ (see \cite{EZ} or \cite{GN} where Jacobi forms of half-integral index $m$ were also considered)
and 
\begin{equation}\label{GammaJ}
[M]=\left(\begin{smallmatrix}
a&0&b&0\\ 
0&1&0&0\\
c&0&d&0\\
0&0&0&1\end{smallmatrix}\right)\in \Gamma^J, \quad
[\left(\begin{smallmatrix}\  \mu\\ -\lambda\end{smallmatrix}\right);0]
=\left(\begin{smallmatrix}
1&0&0&\ \mu\\ 
\lambda&1&\mu&\ 0\\
0&0&1&-\lambda
\\0&0&0&\ 1\end{smallmatrix}\right)\in \Gamma^J
\end{equation}
are $SL_2(\Bbb Z)$- and  unipotent generators of the Jacobi modular group $\Gamma^J$ which is a maximal parabolic subgroup of the integral symplectic group $Sp(2,\ZZ)$.
From the second equation one obtains immediately  that 
$\varphi(\tau,z+1)=\varphi(\tau,z)$. So a Jacobi form has a Fourier expansion: 
\[\varphi(\tau,z)=\sum_{n,\,l\in\mathbb{Z}}a(n,l)e^{2i\pi(n\tau+lz)}.\]

{\bf Definition} (see \cite{EZ}). {\it A holomorphic function $\varphi$
satisfying (\ref{slash1}) and (\ref{slash2}) is called a holomorphic (resp. cusp or   weak) Jacobi form of weight $k$ and index $m$ if $a(n,l)=0$ unless $4nm-l^2\geq0$ (resp. $4nm-l^2>0$ or $n\geq 0$)}.
\smallskip

We denote by $J_{k,m}$ (resp., $J_{k,m}^{\textrm{cusp}}$ or $J_{k,m}^{\rm w}$)
the finite-dimensional spaces of holomorphic Jacobi forms
(resp., Jacobi cusp forms or weak Jacobi forms).

\subsection{Jacobi theta functions of level two}

We recall the definitions of the Jacobi theta functions of level two
(see \cite[Chapter 1]{Mu}). We fix $q=e^{2\pi i \tau}$ and $\zeta=e^{2\pi i z}$, where $\tau \in \HH$ and $z\in \CC$. Then
\begin{align*}
&\vartheta_{00}(\tau,z)=
\sum_{n\in \ZZ}q^{\frac{n^2}{2}}\zeta^n,& 
&\vartheta_{01}(\tau,z)=\sum_{n\in \ZZ}(-1)^nq^{\frac{n^2}{2}}\zeta^n,\\
&\vartheta_{10}(\tau,z)=q^{\frac 1{8}}\zeta^{\frac 1{2}}
\sum_{n\in \ZZ}q^{\frac{n(n+1)}{2}}\zeta^{n},&  &
\vartheta_{11}(\tau,z)=iq^{\frac 1{8}}\zeta^{\frac 1{2}}
\sum_{n\in \ZZ}(-1)^nq^{\frac{n(n+1)}{2}}\zeta^{n}.
\end{align*}
We use another normalisation of the basic Jacobi theta-series 
$\vartheta(\tau,z) =-i\vartheta_{11}(\tau,z)$ (see (\ref{vartheta})). 
The last function is odd $\vartheta(\tau,-z)=-\vartheta(\tau,z)$ 
and $\Bbb Z\tau+\Bbb Z$ is  the set of its simple zeros.
We note that $\vartheta(\tau,z)$ is a Jacobi form of  weight $\frac 1{2}$ and index $\frac 1{2}$
with a multiplier system of order $8$ (see \cite{GN})
\begin{equation*}\label{thetaJ}
\vartheta(\tau,z)\in J_{\frac{1}{2},\frac{1}{2}}
(v_{\eta}^3 \times v_{H})
\end{equation*}
where $v_{\eta}$ is the multiplier system of the Dedekind $\eta$-function
and $v_H$ is a nontrivial binary character of the unipotent Heisenberg subgroup 
of the Jacobi modular group.
In particular $\vartheta^2(\tau,z)$ is a Jacobi form of weight $1$ and index $1$
with a character of order $4$ of the Jacobi modular group induced by 
$v_\eta^6\times 1_H$.

The Jacobi theta-series is a holomorphic Jacobi modular form of the minimal 
(singular) weight and the minimal positive index $1/2$.
It  plays an important role in the theory of Borcherds products
(see \cite{GN}, \cite{CG1},\cite{GPY}). 
For example, we have (see \cite{G1}) the following formulae 
for the generators of the graded ring of weak Jacobi modular forms 
$
J_{0,*}^{{\rm w},\ZZ}=\ZZ[\phi_{0,1}, \phi_{0,2}, \phi_{0,3}, \phi_{0,4}]
$
of weight $0$ and  integral indices  with {\it integral} Fourier coefficients
where 
\begin{align*}
\phi_{0,1}=& 4(\xi_{00}^2+\xi_{01}^2+\xi_{10}^2)=\zeta+10+\zeta^{-1}+q(\cdots) 
\in J^{w,\ZZ}_{0,1},\\
\phi_{0,2}=& 2(\xi_{00}^2\xi_{01}^2+\xi_{00}^2\xi_{10}^2+\xi_{10}^2\xi_{01}^2)
=\zeta+4+\zeta^{-1}+q(\cdots) \in J^{w,\ZZ}_{0,2},\\
\phi_{0,3}=& \left(\tfrac{\vartheta(\tau, 2z)}{\vartheta(\tau, z)}\right)^2
= \zeta+2+ \zeta^{-1}+q(\cdots) \in J^{w,\ZZ}_{0,3},\\
\phi_{0,4}=&\tfrac{\vartheta(\tau, 3z)}{\vartheta(\tau, z)}
 = \zeta+1+\zeta^{-1}+q(\cdots) \in J^{w,\ZZ}_{0,4}
\end{align*} 
and $\xi_{ab}(\tau,z)=\frac{\vartheta_{ab}(\tau,z)}{\theta_{ab}(\tau)}$
 (see (\ref{thetaab}) below). 
\smallskip

\noindent
The following remark is used throughout the article to demonstrate that various weak Jacobi forms are holomorphic Jacobi forms.

\noindent
{\bf Remark.} In this paper we often use the facts that
\begin{equation}\label{eta6phi01}
\eta^6\phi_{0,1},\quad \eta^3\phi_{0,2},\quad \eta^2\phi_{0,3}
\end{equation}
are holomorphic Jacobi forms (see \cite[Example 1.25]{GN}). 
The product  $\eta^2\phi_{0,4}$ is a Jacobi cusp form.

\subsection{Cohen's numbers}
The Henri Cohen numbers are generalisations of the Kronecker--Hurwitz
class number function $H(N)$ (see \cite{C} and \cite{Z}).

 Let $r$ and $N$ be non-negative integers and $r\geq 1$. 
For $N\geq 1$ we define
\begin{equation*}
h(r,N)= \begin{cases}
\tfrac{(-1)^{[r/2]}(r-1)!N^{r-\frac{1}{2}}}
{2^{r-1}\pi^{r}}\,
L(r,\chi_{(-1)^r N})  &\textit{if}\; (-1)^r N\equiv 0,1\mod 4,\\
0  &\textit{if}\; (-1)^r N\equiv 2,3\mod 4
\end{cases}
\end{equation*}
where $\chi_D=\left(\frac{D}{d} \right)$ is the Kronecker character. 
Then for non-negative $N\in \Bbb Q$
\begin{equation*}
H(r,N)= \begin{cases}
\sum_{d^2\vert N}h(r,N/d^2) &\textit{if}\; (-1)^r N\equiv 0,1\mod 4, \quad N>0,\\
\zeta(1-2r)  &\textit{if}\; N=0,\\
0 &\textit{otherwise}.
\end{cases}
\end{equation*}
The following properties are well known.

1) $H(r,N)$ is a rational number. For fixed $r$, the denominators of $H(r,N)$ are bounded. If $D=(-1)^r N$ is a discriminant of a quadratic field, then
$$
H(r,N)=L(1-r,\chi_D).
$$

2) If $(-1)^r N=Df^2$ with  discriminant $D$ of a quadratic field (we allow $D=1$ as a discriminant), we have 
$$
H(r,N)= L(1-r,\chi_D)\sum_{d\vert f}\mu(d)\chi_D(d) d^{r-1}\sigma_{2r-1}(f/d).
$$ 

3) For $r=1$, $H(1,N)=H(N)$ in the classical Kronecker--Hurwitz class number function.

\subsection{Jacobi forms and Jacobi Eisenstein series}
The Jacobi Eisenstein series (see \cite{EZ}) are defined by
\begin{equation*}
E_{k,m}(\tau,z)=\frac{1}{2}\sum_{\substack{(c,d)=1\\ c,d\in \ZZ}}\sum_{\lambda\in \ZZ}(c\tau+d)^{-k}e^m\left( \lambda^2\frac{a\tau+b}{c\tau+d}+2\lambda\frac{z}{c\tau+d} -\frac{cz^2}{c\tau+d} \right)
\end{equation*}
where $e^m(x)=e^{2\pi i mx}$. 
By \cite[Theorem 2.1]{EZ}, we know that  
the series $E_{k,m}$ ($k\geq 4$ even) converges and defines a non-zero element of $J_{k,m}$. The Fourier expansion is given by 
$$
E_{k,m}(\tau,z)=\sum_{\substack{n\in \NN,r\in\ZZ \\ 4nm\geq r^2}}e_{k,m}(n,r)q^n\zeta^r.
$$
For $4nm=r^2$, $e_{k,m}(n,r)=1$  if $r\equiv 0\mod 2m$ 
and $0$ otherwise. For $4nm>r^2$ we have 
\begin{equation*}\label{ek1}
e_{k,1}(n,r)=\frac{H(k-1,4n-r^2)}{\zeta(3-2k)}.
\end{equation*}
Moreover (see \cite[(13), page 48]{EZ}) 
\begin{equation}\label{Ekm}
E_{k,m}=m^{-k+1}\prod_{p\vert m}(1+p^{-k+1})^{-1}\sum_{d^2\vert m}\mu(d)E_{k,1}
\lvert_{k,m} U_d \circ V_{\frac{m}{d^2}}
\end{equation}
where $\left( \varphi\lvert_{k,m}U_d \right)(\tau,z)=\varphi(\tau,dz)$, and the operator 
$V_d$ maps $J_{k,m}$ to $J_{k,dm}$. If $\varphi\in J_{k,m}$, 
$\varphi=\sum a(n,r)q^n\zeta^r$, then 
\begin{equation}\label{Vl}
\varphi\lvert_{k,m}V_l=\sum_{n,r}\biggl(\sum_{d\vert (n,r,l)} d^{k-1}a\left(\frac{nl}{d^2},\frac{r}{d} \biggr) \right)q^n\zeta^r.
\end{equation}

It is easy to check that $E_{4,1}$, $E_{4,2}$, $E_{4,3}$, $E_{4,4}$, $E_{6,1}$, 
$E_{6,2}$, $E_{6,4}$ and $E_{8,1}$ have integral Fourier coefficients and
$E_{8,1}=E_4E_{4,1}$. In $\cite{EZ}$, there are tables for the first Fourier 
coefficients of $E_{4,1}$, $E_{6,1}$ and $E_{8,1}$.

Using GP PARI or considering the relations between Eisenstein series of Jacobi forms and Eisenstein series of Siegel modular forms of genus $2$
(see, for example, \cite{I} or the site \cite{S2}) 
one can calculate the first Fourier coefficients of $E_{10,1}$ and $E_{12,1}$. For $k=4,6,8,10$, we have 
$E_{k,m}(\tau,0)=E_k(\tau)$, where $E_k$ is the Eisenstein series of weight $k$ for $SL_2(\Bbb Z)$. $E_{12,1}(\tau,0)$ is related to the Ramanujan Delta function
$$
\Delta(\tau)=q\prod_{n\ge 1}(1-q^n)^{24}=\sum_{n\ge 1 }\tau(n)q^n.
$$
Using the fact that ${\rm dim}\, J_{10,1}={\rm dim}\, J_{12,1}=2$ we get

\begin{lemma}\label{E12} We have 
$$
E_{10,1}=1+q\left( \zeta^{\pm 2}-\tfrac{860776}{43867}\zeta^{\pm 1}
-\tfrac{9947070}{43867}   \right)+\cdots=\tfrac{23037}{43867}E_4E_{6,1}
+\tfrac{20830}{43867}E_6E_{4,1},
$$
$$
E_{12,1}=1+q\left( \zeta^{\pm 2}+\tfrac{339848}{77683}\zeta^{\pm 1}
+\tfrac{6971898}{77683} \right)+\cdots=\tfrac{27850}{77683}E_6E_{6,1}
+\tfrac{49833}{77683}E_4^2E_{4,1}.
$$
(In the sums above we denote $\zeta^{r}+\zeta^{-r}$ by $\zeta^{\pm r}$.)
In particular, 
$$
E_{12,1}(\tau,0)=E_{12}(\tau)+\tfrac{304819200}{53678953}\Delta(\tau),
$$
$$
\tau(n)=\tfrac{53678953}{304819200} 
\biggl( \sum_{4n\geq r^2,\, r\in \ZZ}\frac{H(11,4n-r^2)}{\zeta(-21)}-\frac{65520}{691}\sigma_{11}(n) \biggr).
$$
\end{lemma}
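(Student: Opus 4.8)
The plan is to work inside the two-dimensional spaces $J_{10,1}$ and $J_{12,1}$ and to pin down the Eisenstein series by their first few Fourier coefficients. Since $E_{4,1}\in J_{4,1}$ and $E_{6,1}\in J_{6,1}$ are holomorphic Jacobi forms and $E_4\in M_4(SL_2(\ZZ))$, $E_6\in M_6(SL_2(\ZZ))$, the products
$$
E_4E_{6,1},\ E_6E_{4,1}\in J_{10,1},\qquad E_6E_{6,1},\ E_4^2E_{4,1}\in J_{12,1}
$$
are again holomorphic Jacobi forms (multiplying by a level-one modular form only adds $q$-powers and preserves the condition $4nm\ge r^2$). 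Using the tables in \cite{EZ} for the first Fourier coefficients $E_{4,1}=1+(\zeta^{\pm2}+56\zeta^{\pm1}+126)q+\cdots$ and $E_{6,1}=1+(\zeta^{\pm2}-88\zeta^{\pm1}-330)q+\cdots$, together with $E_4=1+240q+\cdots$ and $E_6=1-504q+\cdots$, one writes out the $q^0$- and $q^1$-parts of these four products and checks that $\{E_4E_{6,1},E_6E_{4,1}\}$ and $\{E_6E_{6,1},E_4^2E_{4,1}\}$ are linearly independent; as $\dim J_{10,1}=\dim J_{12,1}=2$, each pair is a basis.

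Next I would put $E_{10,1}=aE_4E_{6,1}+bE_6E_{4,1}$ and $E_{12,1}=cE_6E_{6,1}+dE_4^2E_{4,1}$ with rational unknowns. Comparing constant terms gives $a+b=1$ and $c+d=1$; comparing the coefficient of $\zeta^{\pm1}q$, whose left-hand values are $e_{10,1}(1,1)=H(9,3)/\zeta(-17)$ and $e_{12,1}(1,1)=H(11,3)/\zeta(-21)$, gives a second equation in each case, and solving produces
$$
a=\tfrac{23037}{43867},\quad b=\tfrac{20830}{43867},\qquad c=\tfrac{27850}{77683},\quad d=\tfrac{49833}{77683}.
$$
The coefficients of $\zeta^0q$ then come out equal to $H(9,4)/\zeta(-17)$ and $H(11,4)/\zeta(-21)$ as an automatic consistency check, which gives the displayed Fourier expansions. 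The input beyond linear algebra is the evaluation of the Cohen numbers $H(9,3),H(9,4),H(11,3),H(11,4)$; these follow from property (1) of \S2.2, namely $H(r,N)=L(1-r,\chi_{(-1)^rN})$ expressed through generalized Bernoulli numbers, or from the PARI/GP computation mentioned above.

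Finally, for the specialization I would set $z=0$ in $E_{12,1}=cE_6E_{6,1}+dE_4^2E_{4,1}$ and use $E_{6,1}(\tau,0)=E_6(\tau)$, $E_{4,1}(\tau,0)=E_4(\tau)$ to get $E_{12,1}(\tau,0)=cE_6^2+dE_4^3$; then the classical identities $E_4^3-E_6^2=1728\Delta$ and $E_4^3=E_{12}+\tfrac{432000}{691}\Delta$, together with $c+d=1$, give
$$
E_{12,1}(\tau,0)=E_4^3-1728c\,\Delta=E_{12}+\bigl(\tfrac{432000}{691}-1728c\bigr)\Delta,
$$
where the bracket equals $\tfrac{304819200}{53678953}$ (note $53678953=691\cdot77683$). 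Extracting the coefficient of $q^n$ from this equality — the left side being $\sum_{4n\ge r^2,\,r\in\ZZ}e_{12,1}(n,r)=\sum_{4n\ge r^2}H(11,4n-r^2)/\zeta(-21)$, since the boundary terms $4n=r^2$ contribute $H(11,0)/\zeta(-21)=1$ (as $H(11,0)=\zeta(-21)$), and the coefficient of $q^n$ in $E_{12}$ being $\tfrac{65520}{691}\sigma_{11}(n)$ — and solving for $\tau(n)$ yields the stated formula. The only genuinely non-formal step is the evaluation of the relevant Cohen numbers; everything else is linear algebra in a two-dimensional space plus the standard relations among $E_4$, $E_6$, $E_{12}$ and $\Delta$.
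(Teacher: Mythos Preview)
Your argument is correct and is essentially the approach the paper sketches: the paper merely remarks that the first Fourier coefficients of $E_{10,1}$ and $E_{12,1}$ can be computed (by PARI or via Siegel Eisenstein series) and that $\dim J_{10,1}=\dim J_{12,1}=2$, leaving the linear algebra implicit. You have supplied exactly that linear algebra, including the specialization at $z=0$ via $E_4^3-E_6^2=1728\Delta$ and $E_4^3=E_{12}+\tfrac{432000}{691}\Delta$, and your handling of the boundary terms $4n=r^2$ (where $r$ is necessarily even, so $e_{12,1}(n,r)=1=H(11,0)/\zeta(-21)$) is clean.
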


In \S 3.3 we give two formulas for $\tau(n)$ in the terms of  $H(3,N)$ and $H(5,N)$ using Jacobi--Eisenstein series of weight $4$ and $6$ together with new formulas for $\theta_{a,b}^8$ of theta-constants with rational characteristics.

\subsection{Pullbacks of Jacobi modular forms}

In this paper we consider the classical theta-constants $\theta_{a,b}(\tau)$
as specialisations of the basic Jacobi theta-series $\vartheta(\tau,z)$.
The next lemma from \cite{GJ} is a more explicit version of \cite[Theorem 1.3]{EZ}
(see also \cite[Lemma 2.1]{G1}).

\begin{proposition}\label{pullback}
Let $\varphi\in J_{k,m}$, $X=\binom{p}{-q} \in\mathbb{Q}^2$. 
We put  
$$
\Gamma_X=
\left\{M\in SL_2(\mathbb{Z})\,|\,MX\equiv X \mod \Bbb Z^2\right\}\supset \Gamma(N)
$$
where $N$ is the minimum positive integer such that $NX\in\Bbb Z^2$
and $\Gamma(N)$ is the principal congruence subgroup of level $N$.  
Then 
$$
\varphi_X(\tau)=e^{2i\pi m(q^2\tau+pq)}\varphi(\tau,q\tau+p)
\in M_k(\Gamma_X, \chi_X)
$$ 
is a modular form of weight $k$ with respect to $\Gamma_X$ with 
a character
$$
\chi_X(M)=e^{2i\pi m\textrm{det}(MX,X)}.
$$
\end{proposition}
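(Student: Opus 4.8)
The plan is to verify the two defining transformation laws of a modular form directly from the two Jacobi functional equations \eqref{slash1} and \eqref{slash2}, using the substitution $z \mapsto q\tau + p$. First I would check invariance under the stabiliser $\Gamma_X$. Take $M = \left(\begin{smallmatrix} a & b \\ c & d \end{smallmatrix}\right) \in \Gamma_X$, so that $MX \equiv X \bmod \ZZ^2$; write $MX = X + \binom{\mu}{-\lambda}$ with $\mu, \lambda \in \ZZ$, i.e. the new ``elliptic variable'' parameters $(p', q')$ attached to $MX$ differ from $(p,q)$ by integers. The idea is to apply \eqref{slash1} to move $M$ through $\varphi$, which replaces $(\tau, q\tau+p)$ by $\left(\frac{a\tau+b}{c\tau+d}, \frac{q\tau+p}{c\tau+d}\right)$, and then to recognise $\frac{q\tau+p}{c\tau+d} = q'\frac{a\tau+b}{c\tau+d} + p'$ up to the lattice shift $\lambda\frac{a\tau+b}{c\tau+d} + \mu$; the remaining discrepancy is absorbed using \eqref{slash2} with that $(\lambda,\mu)$. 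Bookkeeping the exponential factors $e^{2i\pi m(\cdots)}$ from both slash operations against the prefactor $e^{2i\pi m(q^2\tau + pq)}$ and its transform $e^{2i\pi m(q'^2(M\tau) + p'q')}$, the net automorphy factor collapses to $(c\tau+d)^k$ times a root of unity, and that root of unity is exactly $\chi_X(M) = e^{2i\pi m \det(MX, X)}$. Establishing that this is a genuine character of $\Gamma_X$ (multiplicativity in $M$) is a short cocycle computation: $\det(M_1M_2 X, X) = \det(M_1 M_2 X, M_1 X) + \det(M_1 X, X) = \det(M_2 X, X) + \det(M_1 X, X) \bmod \ZZ$, using $\det M_1 = 1$ and that $\det(\,\cdot\,,\,\cdot\,)$ changes by an integer when either argument is shifted by $\ZZ^2$.

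Second I would treat holomorphy and the behaviour at the cusps of $\Gamma_X$, which is what upgrades ``weakly modular'' to ``modular form.'' Since $\varphi \in J_{k,m}$ is holomorphic on $\HH \times \CC$, the pullback $\varphi_X$ is holomorphic on $\HH$, and it only remains to bound its growth at every cusp. A cusp of $\Gamma_X$ is $M^{-1}\infty$ for some $M \in SL_2(\ZZ)$; applying the transformation law just proved (or \eqref{slash1} directly) reduces the local expansion of $\varphi_X$ near that cusp to the expansion of $\varphi_{MX}$ near $\infty$, so it suffices to check each $\varphi_Y$, $Y = \binom{p_0}{-q_0}$, is bounded as $\mathrm{Im}\,\tau \to \infty$. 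Plugging the Fourier expansion $\varphi(\tau,z) = \sum_{n,l} a(n,l) q^n \zeta^l$ into $\varphi_Y(\tau) = e^{2i\pi m(q_0^2\tau + p_0 q_0)} \sum_{n,l} a(n,l) e^{2i\pi(n\tau + l(q_0\tau + p_0))}$ gives a $q$-expansion (in fractional powers, cleared by passing to $N\tau$) whose exponents are $n + lq_0 + mq_0^2 = \frac{1}{4m}\big(4nm - l^2\big) + \frac{1}{4m}(l + 2mq_0)^2 \geq 0$ by the holomorphic Jacobi condition $4nm - l^2 \geq 0$. Hence every exponent is non-negative and $\varphi_Y$ is holomorphic at $\infty$; this is the crux of the matter and the place where the hypothesis $\varphi \in J_{k,m}$ (as opposed to merely weak) is used.

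The main obstacle I anticipate is neither step in isolation but the careful matching of the many exponential factors in the first step: one must be scrupulous that the $\lambda, \mu$ produced by writing $MX = X + \binom{\mu}{-\lambda}$ are the same ones fed into \eqref{slash2}, that the order of composition $[M]$ then $[\binom{\mu}{-\lambda};0]$ (rather than the reverse) is the one dictated by the group law in $\Gamma^J$, and that the half-integral $m$ case (allowed in \cite{GN}) does not introduce sign ambiguities. Once the algebra of the automorphy factor is organised correctly it simplifies to $(c\tau+d)^k \chi_X(M)$ cleanly, so I would allocate most of the write-up to that identity and then dispatch the cusp estimate quickly via the displayed inequality on exponents. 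Finally, I would note $\Gamma(N) \subseteq \Gamma_X$ because $M \equiv I \bmod N$ forces $MX \equiv X \bmod \ZZ^2$ (as $NX \in \ZZ^2$), which justifies the inclusion asserted in the statement and guarantees $\Gamma_X$ has finite index, so $\varphi_X$ lives in $M_k(\Gamma_X, \chi_X)$ as claimed.
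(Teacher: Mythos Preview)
Your proposal is correct and follows essentially the same line as the paper's proof: both derive the $\Gamma_X$-transformation law by combining the $SL_2$-functional equation \eqref{slash1} with the lattice-shift equation \eqref{slash2}, and both handle holomorphy at the cusps via the Fourier condition $4nm-l^2\ge 0$. The only real difference is organisational: where you plan to track the exponential factors by hand (and rightly flag this as the delicate point), the paper packages the same computation inside the Jacobi group $\Gamma^J$, writing $\varphi_X=(\varphi|_{k,m}[X;0])|_{z=0}$ and using the identities $[X;0][M]=[M][M^{-1}X;0]$ and $[X;\kappa_1][Y;\kappa_2]=[X+Y;\kappa_1+\kappa_2+\det(X,Y)]$; the character then drops out of the central element $[0;\kappa]$ acting by $e^{2i\pi m\kappa}$, with no explicit bookkeeping required. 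Your completing-the-square argument for the cusp condition is more explicit than the paper's one-line dismissal (``one checks the holomorphicity\dots''), and is exactly what is needed there.
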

\begin{proof} We use the $|_{k,m}$-action of $\Gamma^J$ defined in 
(\ref{slash1})--(\ref{slash2}). The two functional equations are equivalent
to the  equation $\varphi|_{k,m}g=\varphi$ for any $g\in \Gamma^J$.
Specialising (\ref{slash2}) for  $z=0$, we get 
$$
\varphi_X(\tau)=(\varphi|_{k,m}[X;0])|_{z=0}.
$$
For any $M\in SL_2(\mathbb{Z})$ we have 
$[X;0][M]=[M][M^{-1}X;0]$ and  
$$
\big(\varphi|_{k,m}[X;0]\big)|_{k,m}[M]=
\varphi|_{k,m}[M^{-1}X;0],\quad\forall\  M\in SL_2(\mathbb{Z}).
$$
We note that 
$
[X;\kappa_1][Y;\kappa_2]=[X+Y;\kappa_1+\kappa_2+\textrm{det}(X,Y)]
$.
Let $M\in\Gamma_X$, i.e. $MX-X\in\mathbb{Z}^2$. Then
$$
[M^{-1}X;0]=[M^{-1}X;0][X;0]^{-1}[X;0]=
[M^{-1}X-X;-\textrm{det}(M^{-1}X,X)][X;0].
$$
The Jacobi form $\varphi$ is $|_{k,m}$-invariant with respect to 
$[M^{-1}X-X;0]\in \Gamma^J$. For the central element of the Jacobi group
$[0;\kappa]$ with $\kappa\in \Bbb Q$ we have 
$\varphi|_{k,m}[0;\kappa]=e^{2i\pi m\kappa}\varphi$. 
Therefore we get
$$
\varphi_X|_kM
=e^{2i\pi m\,\textrm{det}(MX,X)}\varphi_X\quad\forall\, M\in\Gamma_X.
$$
One checks the holomorphicity (or cuspidality) at infinity using the conditions on the Fourier expansion of holomorphic (cusp) Jacobi forms.
\end{proof}

We get the following corollary from the proof of the last proposition.
\begin{corollary}\label{modular vector}
In Proposition \ref{pullback} we fix a finite system of representatives
$R_X=\{SL_2(\Bbb Z)X\}/\Bbb Z^2$. Then 
$\{\varphi_Y(\tau)\}_{Y\in R_X}$ is a vector valued modular form with respect to 
$SL_2(\Bbb Z)$. More exactly, for any $M\in SL_2(\Bbb Z)$ we have
$$
\varphi_S|_k\,M=e^{2i\pi m\, {\rm det}(MS_M, S)}\varphi_{S_M},\quad
$$
where $M^{-1}S \equiv S_M \mod \Bbb Z^2$ and $S_M\in R_X$.
\end{corollary}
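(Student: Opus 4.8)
The plan is to read off Corollary \ref{modular vector} from the computation already carried out in the proof of Proposition \ref{pullback}, keeping track of the dependence on $M$ rather than specialising to $M\in\Gamma_X$. First I would fix a complete set of representatives $R_X=\{SL_2(\ZZ)X\}/\ZZ^2$ for the (finite) orbit of $X$ under the left $SL_2(\ZZ)$-action modulo $\ZZ^2$; finiteness holds because $\Gamma(N)\subset\Gamma_X$ has finite index in $SL_2(\ZZ)$, where $N$ is the denominator of $X$. For each $S\in R_X$ the function $\varphi_S(\tau)=(\varphi|_{k,m}[S;0])|_{z=0}$ is well defined, and the goal is the transformation law $\varphi_S|_k M=e^{2i\pi m\,\det(MS_M,S)}\varphi_{S_M}$ where $S_M\in R_X$ is the representative with $M^{-1}S\equiv S_M\bmod\ZZ^2$.

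Next I would repeat verbatim the manipulation from the proof of the proposition. Using $[S;0][M]=[M][M^{-1}S;0]$ one gets $\varphi_S|_k M=(\varphi|_{k,m}[M^{-1}S;0])|_{z=0}$. Write $M^{-1}S=S_M+v$ with $v\in\ZZ^2$; then by the cocycle identity $[X;\kappa_1][Y;\kappa_2]=[X+Y;\kappa_1+\kappa_2+\det(X,Y)]$ we have
\[
[M^{-1}S;0]=[v+S_M;0]=[v;-\det(v,S_M)][S_M;0],
\]
and since $[v;0]\in\Gamma^J$ with $v\in\ZZ^2$ the Jacobi form $\varphi$ is $|_{k,m}$-invariant under it, while the central element $[0;\kappa]$ acts by the scalar $e^{2i\pi m\kappa}$. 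Hence $\varphi|_{k,m}[M^{-1}S;0]=e^{-2i\pi m\det(v,S_M)}\varphi|_{k,m}[S_M;0]$, and specialising at $z=0$ gives $\varphi_S|_k M=e^{-2i\pi m\det(v,S_M)}\varphi_{S_M}$. Finally, $\det(v,S_M)=\det(M^{-1}S-S_M,S_M)=\det(M^{-1}S,S_M)$, and applying $M$ (which has determinant $1$, so preserves $\det$) turns this into $\det(S,MS_M)=-\det(MS_M,S)$; thus the scalar is $e^{2i\pi m\det(MS_M,S)}$, as claimed. The vector-valued assertion is then just the statement that $M\mapsto(\,S\mapsto S_M\,)$ together with these scalars defines a (projective, or genuine after the usual normalisation) representation of $SL_2(\ZZ)$ on $\bigoplus_{S\in R_X}\CC\,\varphi_S$, which follows from the already-noted multiplicativity of the cocycle.

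I do not expect a serious obstacle: the corollary is essentially a bookkeeping refinement of the proposition's proof. The one point requiring a little care is the choice of representatives and checking that the scalar $e^{2i\pi m\det(MS_M,S)}$ is independent of which lifts are used — but since any two lifts of $S_M$ differ by an element of $\ZZ^2$ and $m\in\frac12\ZZ$ with $\det$ of an integer vector against anything in $\ZZ^2$ being an integer, the ambiguity contributes $e^{2i\pi m\cdot(\text{integer})}$, which equals $1$ when $m\in\ZZ$ and must be tracked as part of the multiplier when $m\in\frac12+\ZZ$; in either case the formula is consistent with the conventions fixed in (\ref{slash1})--(\ref{slash2}). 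The holomorphy (or cuspidality) at the cusps of each $\varphi_S$ is inherited from $\varphi$ exactly as in the proof of Proposition \ref{pullback}, so nothing new is needed there.
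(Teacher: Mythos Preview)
Your argument is correct and is exactly what the paper intends: the corollary is stated without a separate proof, only with the remark that it follows from the proof of Proposition~\ref{pullback}, and your write-up simply makes that extraction explicit. The only unnecessary digression is the discussion of half-integral index~$m$, since in the setting of the corollary the index is a natural number.
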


\noindent
{\bf Remark.} If $X=\binom{1/2}{0}$ then $\Gamma_{X}=\Gamma_0(2)$. If the index 
$m$ is even then $\chi_{X}$ is trivial. If $m$ is odd then 
$\chi_{X}$ is a non-trivial binary character and $\phi_X(\tau)\in M_k(\Gamma_0(4))$.

If $X= {}^t(\tfrac{1}{2},-\tfrac{1}{2})$,  then
$\Gamma_{X}$ is the so-called theta-group
$$
\Gamma_\theta=
\biggl\{ M=\begin{pmatrix} a&b\\c&d\end{pmatrix} \in 
 SL_2(\ZZ)\,|\  M\equiv \begin{pmatrix} 1&0\\0&1\end{pmatrix} 
\ \text{ or }\ \begin{pmatrix} 0&1\\1&0\end{pmatrix}
\mod 2
\biggr\}
$$
which is a subgroup conjugate to $\Gamma_0(2)$.
\section{Explicit formulas without cusp terms} 

\subsection{Fourier expansions of $\vartheta^8(\tau,z)$}

\begin{theorem}\label{Ftheta8}
Let $f_4(n,r)$ be the Fourier coefficients of the eighth power of the Jacobi triple product
$$
\vartheta^8(\tau,z) = 
 q \zeta^{-4} \prod_{n\geq 1} (1-q^{n-1}\zeta)^8(1-q^n \zeta^{-1})^8(1-q^n)^8=
\sum_{n,r} f_4(n,r)q^n\zeta^r.
$$
Then, if $16n>r^2$, we have 
\begin{equation}\label{f4}
f_4(n,r)=-\frac{511}{2} H(3,\frac{16n-r^2}4)
+\frac{7}{2} \sum_{d \lvert (n,r,4)}d^3
H\left(3,\frac{16n-r^2}{d^2}\right)
\end{equation}
and 
$$
f_4(n,r)= \begin{cases}
1  &{\ if}\  16n=r^2\ {\ and\ } n\  {\ is\  odd},\\
0  &{\ if}\ 16n=r^2\  {\ and\ }  n\  {\ is\  even}.
\end{cases}
$$
(We note that $H(3,N)=0$ for any non integral $N$.)
The more compact form of the last identity is
\begin{equation}\label{theta8}
\vartheta^8(\tau,z)=E_{4,1}(\tau,2z)-E_{4,4}(\tau,z).
\end{equation}
Moreover,
\begin{equation}\label{wptheta8}
12\wp(\tau,z)\vartheta^8(\tau,z)=
\eta^6(\tau)\vartheta^6(\tau,z)\phi_{0,1}(\tau,z)
=E_{6,1}(\tau,2z)-E_{6,4}(\tau,z)
\end{equation}
where 
$$\wp(\tau,z)=-\frac{1}{4\pi^2z^2}-\frac{1}{4\pi^2}\sum_{\substack{\omega\in \ZZ\tau +\ZZ \\
\omega \neq 0}}\left((z+\omega )^{-2}-\omega^{-2}  \right)
\in J_{2,0}^{(mer)}$$
is the Weierstrass $\wp$-function which is  a meromorphic Jacobi form of weight $2$ 
and index $0$ with pole of order $2$ along $z\in  \ZZ\tau +\ZZ$.
\end{theorem}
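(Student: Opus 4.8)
The plan is to identify $\vartheta^8$ with an explicit weak Jacobi form of weight $4$ and index $4$, then upgrade it to a holomorphic Jacobi form, and finally recognize that holomorphic form as a difference of Eisenstein series. First I would observe that $\vartheta^2 \in J_{1,1}$ with a character, so $\vartheta^8 \in J_{4,4}$ with trivial character (the fourth power kills the order-$4$ character). From the product expansion one reads off that $\vartheta^8 = q - 8q\zeta^{\pm 1}\cdot(\dots)$; in particular it has a zero of order $8$ along $z \in \ZZ\tau+\ZZ$, hence it is divisible by $\vartheta^8$ in the ring of weak Jacobi forms (trivially), but more usefully I would write $\vartheta^8 = \eta^8 \cdot (\vartheta^8/\eta^8)$ where $\vartheta^8/\eta^8$ is a weak Jacobi form of weight $0$ and index $4$. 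By the structure theory of $J^{\mathrm{w}}_{0,*}$, this weight-$0$ index-$4$ form lies in the $\CC[\phi_{0,1},\dots,\phi_{0,4}]$ module over $M_*(SL_2(\ZZ))$; comparing the $q^0$-terms (which come directly from the Jacobi triple product at $q=0$, namely $(1-\zeta)^8(1-\zeta^{-1})^8 \cdot$ shift) pins down the combination. Concretely I expect $\vartheta^8/\eta^8$ to be a polynomial in $\phi_{0,1},\dots,\phi_{0,4}$ with constant coefficients, determined by finitely many Fourier coefficients at $q^0$ and $q^1$.

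Next I would establish holomorphicity. Using the Remark in \S2.1 that $\eta^6\phi_{0,1}$, $\eta^3\phi_{0,2}$, $\eta^2\phi_{0,3}$ are holomorphic Jacobi forms and $\eta^2\phi_{0,4}$ is a cusp form, I would check that the specific polynomial expression for $\vartheta^8 = \eta^8 P(\phi_{0,1},\dots,\phi_{0,4})$ distributes the $\eta$-powers so that each monomial is manifestly holomorphic; alternatively, since $\vartheta^8$ visibly has a Fourier expansion supported on $4\cdot 4\cdot n - r^2 \geq 0$ (because $\vartheta$ already satisfies $4\cdot\frac12\cdot n - r^2 \geq 0$ and taking powers only strengthens this via the hyperbolic-norm triangle inequality on exponents), holomorphicity is immediate from the definition. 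Then $\vartheta^8 \in J_{4,4}$. Since $\dim J_{4,m}$ is small and $J_{4,4}$ is spanned by Eisenstein series $E_{4,1}|U_2$, $E_{4,4}$, and forms coming from $E_{4,2}$ and cusp forms, I would compute enough Fourier coefficients of $\vartheta^8$ to match it against $E_{4,1}(\tau,2z) - E_{4,4}(\tau,z)$. The relation $e_{k,1}(n,r) = H(k-1,4n-r^2)/\zeta(3-2k)$ together with the $U_d$, $V_l$ formulas \eqref{Ekm}, \eqref{Vl} then yields \eqref{f4} after tracking: $E_{4,1}(\tau,2z)$ contributes $H(3,4n'-r'^2)/\zeta(-5)$ reindexed by $r = 2r'$, and $E_{4,4}$ expands via \eqref{Ekm} with $\sum_{d^2\mid 4}\mu(d) = 1 - 1 = $ contributions from $d=1,2$, producing the $-\frac{511}{2}$ and $+\frac{7}{2}\sum_{d\mid(n,r,4)}d^3$ structure once $\zeta(-5) = -\tfrac{1}{252}$ and the Euler factor $(1+4^{-3})^{-1} = \tfrac{64}{65}$ are inserted. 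The boundary values $16n = r^2$ follow from the $4nm = r^2$ clause: $e_{4,1}(n',r') = 1$ iff $r' \equiv 0 \bmod 2$, and $E_{4,4}$ contributes $1$ iff $r \equiv 0 \bmod 8$, and one checks the parity of $n$ governs which case occurs.

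For the $\wp$-part of the statement, I would use the classical identity relating $\wp$ to the logarithmic second derivative of $\vartheta$, or more efficiently the known identity $12\wp \cdot \vartheta^2 = \eta^6\phi_{0,1}$ (up to normalization, which I would fix by comparing Laurent expansions in $z$ at $z=0$: both sides have a double pole with the same residue, and $\phi_{0,1} = \zeta + 10 + \zeta^{-1} + O(q)$ matches $-\tfrac{1}{4\pi^2 z^2}$-type singular behavior after multiplying by $\vartheta^2 \sim (2\pi z)^2\eta^6$). Multiplying through by $\vartheta^6$ gives the middle equality $12\wp\vartheta^8 = \eta^6\vartheta^6\phi_{0,1}$. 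This is a weak Jacobi form of weight $6$ and index $4$; since $\eta^6\phi_{0,1}$ is holomorphic and $\vartheta^6 \in J_{3,3}$ has a zero of order $6$ along the lattice, the product is holomorphic, so it lies in $J_{6,4}$. Again matching Fourier coefficients against $E_{6,1}(\tau,2z) - E_{6,4}(\tau,z)$ and invoking $e_{6,1}(n,r) = H(5,4n-r^2)/\zeta(-9)$ completes the identification.

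The main obstacle I anticipate is purely bookkeeping: correctly propagating the operators $U_d$ and $V_{m/d^2}$ through \eqref{Ekm} for $m=4$ and extracting the exact rational constants $-\tfrac{511}{2}$ and $\tfrac{7}{2}$, which requires careful handling of the Euler product $\prod_{p\mid 4}(1+p^{-k+1})^{-1} = (1+2^{-3})^{-1} = \tfrac 89$ versus the normalization $4^{-3}$, the Möbius sum over $d^2 \mid 4$, and the reindexing $r \mapsto r/2$ induced by $U_2$ acting on $E_{4,1}$. A secondary subtlety is verifying that the character of $\vartheta^8$ is genuinely trivial on all of $\Gamma^J$ (not merely on $SL_2(\ZZ)$), i.e.\ that $v_H^8 = 1_H$ on the Heisenberg part — this follows since $v_H$ has order $2$ on that subgroup, so even $\vartheta^4$ already has trivial Heisenberg character, but I would state it explicitly. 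Once the identity $\vartheta^8 = E_{4,1}(\tau,2z) - E_{4,4}(\tau,z)$ is in hand as an equality in the finite-dimensional space $J_{4,4}$, everything else is a finite check of leading Fourier coefficients.
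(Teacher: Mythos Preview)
Your core approach matches the paper's: show $\vartheta^8 \in J_{4,4}$, exploit that this space is small, and compare Fourier coefficients against the Eisenstein series. The paper is simply crisper about the crucial step you leave vague: it states outright that $\dim J_{4,4} = \dim J_{6,4} = 2$ and $J_{4,4}^{\rm cusp} = J_{6,4}^{\rm cusp} = \{0\}$, so $E_{k,1}(\tau,2z)$ and $E_{k,4}(\tau,z)$ already span everything. Then since $\vartheta^8$ has vanishing $q^0$-term and its $q^1$-part is $\zeta^{-4}(1-\zeta)^8$, matching one nonzero coefficient suffices. Your phrase ``spanned by Eisenstein series $E_{4,1}|U_2$, $E_{4,4}$, and forms coming from $E_{4,2}$ and cusp forms'' is too loose; you should replace it with the exact dimension count, which does all the work.

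Your detour through $\vartheta^8/\eta^8$ and the ring $J^{\mathrm w}_{0,*}$ is both unnecessary and slightly off: $\eta^8$ has multiplier $v_\eta^8$ of order $3$, so $\vartheta^8/\eta^8$ is \emph{not} a weak Jacobi form with trivial character and does not lie in $\CC[\phi_{0,1},\dots,\phi_{0,4}]$ as you claim. Fortunately your ``alternatively'' clause (holomorphicity directly from the support condition on $\vartheta$) is correct and is exactly what the paper uses, so drop the detour entirely. For the weight-$6$ identity the paper does precisely what you do: invoke $\phi_{0,1} = 12\wp\,\vartheta^2/\eta^6$ and the holomorphicity of $\eta^6\phi_{0,1}$ to place $12\wp\,\vartheta^8$ in $J_{6,4}$, then match one coefficient. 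Your anticipated bookkeeping worries about \eqref{Ekm} are real but routine; the paper records $E_{4,4} = \tfrac{1}{72}\bigl(E_{4,1}|V_4 - E_{4,1}(\tau,2z)\bigr)$ and reads off $e_{4,4}(n,r)$ from there, which is the cleanest way to land on the constants $-\tfrac{511}{2}$ and $\tfrac{7}{2}$.
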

\begin{proof} Since $\vartheta\in J_{\frac{1}{2},\frac{1}{2}}(v^3_\eta\times v_H)$ (see \S 2.1)
we have $\vartheta(\tau,z)^8\in J_{4,4}$.
We know that 
${\rm dim}\,J_{4,4}= {\rm dim}\, J_{6,4}=2$ and  
$J_{4,4}^{\rm cusp}=J_{6,4}^{\rm cusp}=\{0\}$.
Therefore $J_{4,4}$ and $J_{6,4}$ are generated by the Jacobi--Eisenstein seris
$E_{k,4}$ and $E_{k,1}(\tau, 2z)$ for $k=4$, $6$. 

By (\ref{ek1}), we have
\begin{equation*}
e_{4,1}(n,r) = \begin{cases}
-252H(3,4n-r^2)  &{\rm if}\; 4n>r^2,\\
\ \ 1  &{\rm if}\; 4n=r^2.
\end{cases}
\end{equation*}
(We note that $H(3,N)$ is negative.)
By (\ref{Ekm}), we get
\begin{equation*}
E_{4,4}(\tau,z)=\frac{1}{72}\bigl(E_{4,1}\lvert V_4-E_{4,1}(\tau,2z)\bigr).
\end{equation*}
By (\ref{Vl}), we obtain
\begin{equation*}
e_{4,4}(n,r) = \begin{cases}
\frac{7}{2}\bigl(H(3,\frac{16n-r^2}4)-\sum_{d \lvert (n,r,4)}
d^3H(3,\frac{16n-r^2}{d^2})\bigr) \ {\rm if}\; 16n>r^2,\\
\ 1\quad {\rm if}\; 16n=r^2, \; n \equiv 0\mod 2,\\
\ 0\quad  {\rm if }\;16n=r^2, \; n\equiv 1 \mod 2.
\end{cases}
\end{equation*}
We have 
\begin{align*}
E_{4,1}(\tau,2z)&=1+q(\zeta^{\pm 4}+56\zeta^{\pm 2}+126)+\dots,\\
E_{4,4}(\tau,z)&=1+q(8\zeta^{\pm 3}+28\zeta^{\pm 2}+56\zeta^{\pm 1}+56)+\dots.
\end{align*}
$\vartheta^8(\tau,z)$ has a vanishing constant Fourier coefficient $f_4(0,0)$ and its Fourier expansion starts with 
$q\zeta^{-4}(1-\zeta)^8$. Comparing non-trivial  Fourier coefficients
we get the formula (\ref{theta8}) which  implies the formula for the Fourier coefficients of $\vartheta^8(\tau,z)$.
\smallskip

We calculate the integral Fourier coefficients of $E_{6,4}$:
\begin{equation*}
e_{6,4}(n,r) = \begin{cases}
\frac{1}{8}\bigl(
H(5,\frac{16n-r^2}4)+\sum_{d \lvert (n,r,4)}-d^5H(5,\frac{16n-r^2}{d^2})\bigr)
\  {\rm if}\  16n>r^2,\\
1\ {\rm if}\  16n=r^2\ {\rm and}\ n\ {\rm is\  even},\\
0\ {\rm if}\  16n=r^2\ {\rm and}\ n\ {\rm is\  odd}.\\
\end{cases}
\end{equation*}
If $E_{6,1}(\tau,2z)-E_{6,4}(\tau,z)=\sum f_6(n,r)q^n\zeta^r$ then we have
\begin{equation*}
f_6(n,r) = \begin{cases}
1  &\textit{if}\; 16n=r^2 \ {\rm and}\ n\ {\rm is\  odd},\\
0  &\textit{if}\; 16n=r^2 \ {\rm and}\ n\ {\rm is\  even},
\end{cases}
\end{equation*}
\begin{equation}\label{f6}
f_6(n,r) =-\tfrac{1057}{8}H(5,\tfrac{16n-r^2}4)+\tfrac{1}{8} \sum_{d \lvert (n,r,4)}d^5 \cdot H\left(5,\tfrac{16n-r^2}{d^2}\right),\  16n>r^2.
\end{equation}
We note that $\phi_{0,1}=12\wp\vartheta^2/\eta^6$
and $\eta^6(\tau)\phi_{0,1}(\tau,z)$ is holomorphic at infinity
(see \cite[Example 1.25]{GN}). Therefore 
$\eta^6\phi_{0,1}\vartheta^6=12\wp\vartheta^8\in J_{6,4}$ and its constant Fourier coefficient vanishes. Comparing one non-zero Fourier coefficient we get the identity 
of the proposition. 
\end{proof}
\smallskip

\noindent
{\bf Remark.} In general, we can get more formulas by analysing higher indices and  higher weights. For instance, in the case of weight $4$, we have ${\rm dim} \, J_{4,m}=2$, for $m=8,9,12$. And we have that 
$\vartheta(\tau,3z)\vartheta^7(\tau,z) \in J_{4,8}$, 
$\vartheta^2(\tau,2z) \vartheta^6(\tau,z)\phi_{0,2} \in J_{4,9}$, 
$\vartheta^2(\tau,3z)\vartheta^6(\tau,z) \in J_{4,12}$  
and they do not have a $q^0$-term and as in the previous theorem, we obtain
\begin{align*}
\vartheta(\tau,3z)\vartheta^7(\tau,z)&=E_{4,8}(\tau,z)-E_{4,2}(\tau,2z),\\
\vartheta^2(\tau,2z) \vartheta^6(\tau,z) \phi_{0,2}&
=E_{4,1}(\tau,3z)-E_{4,9}(\tau,z),\\
\vartheta^2(\tau,3z)\vartheta^6(\tau,z)&=E_{4,3}(\tau,2z)-E_{4,12}(\tau,z).
\end{align*} 
We check that $\eta(\tau)^2\vartheta^2(\tau,z)\phi_{0,2}$ is a holomorphic Jacobi form. For that we need to calculate only its first $q$-term:
$q^\frac 13(r-2+r^{-1})(r+4+r^{-1})$.
The form $\vartheta^2(\tau,2z)\vartheta^6(\tau,z)\phi_{0,2}$
 is holomorphic as the product of the
last one  with the simplest  holomorphic theta-quark 
$\left(\frac{\vartheta(\tau,z)^2\vartheta(\tau,2z)}{\eta(\tau)}\right)^2$ (see, for example, \cite{CG2}).
\smallskip

\noindent
{\bf Jacobi theta-series and $\vartheta^8$.} We consider the even unimodular lattice $E_8$.
The Jacobi theta-series in one variable are examples of Jacobi modular forms
(see \cite{EZ}).  For any $u\in E_8$ one has
$$
\Theta_{E_8,u}(\tau,z)=\sum_{v\in E_8}\exp{\bigl(\pi i (v,v)\tau
+ 2 \pi i(v,u)z\bigr)}\in J_{4,\, \frac{(u,u)}2}.
$$
\begin{lemma}
We fix an element $u_2\in E_8$ with $(u_2,u_2)=2$ and a primitive vector 
$u_8\in E_8$ with $(u_8,u_8)=8$. Then 
$$
\vartheta^8(\tau,z)=\Theta_{E_8,u_2}(\tau,2z)-\Theta_{E_8,u_8}(\tau,z).
$$
\end{lemma}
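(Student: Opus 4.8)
The plan is to identify each lattice theta series with a Jacobi--Eisenstein series and then to quote Theorem \ref{Ftheta8}. Recall (as stated just before the lemma) that $\Theta_{E_8,u}(\tau,z)\in J_{4,(u,u)/2}$ for every $u\in E_8$; in particular $\Theta_{E_8,u_2}\in J_{4,1}$ and $\Theta_{E_8,u_8}\in J_{4,4}$.

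First I would treat the index-one factor. Since $J_{4,1}=\CC E_{4,1}$ (see \cite{EZ}), we have $\Theta_{E_8,u_2}=c\,E_{4,1}$ for some scalar $c$; the $q^0$-coefficient of $\Theta_{E_8,u_2}$ equals $\#\{v\in E_8:(v,v)=0\}=1$ and that of $E_{4,1}$ is $1$, so $c=1$ and $\Theta_{E_8,u_2}=E_{4,1}$. Substituting $z\mapsto 2z$, which carries index $1$ to index $4$, gives $\Theta_{E_8,u_2}(\tau,2z)=E_{4,1}(\tau,2z)$.

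Next I would show $\Theta_{E_8,u_8}=E_{4,4}$. By the proof of Theorem \ref{Ftheta8}, $\dim J_{4,4}=2$, $J_{4,4}^{\mathrm{cusp}}=\{0\}$, and $\{E_{4,1}(\tau,2z),\,E_{4,4}\}$ is a basis, so write $\Theta_{E_8,u_8}=\alpha\,E_{4,1}(\tau,2z)+\beta\,E_{4,4}$. For a holomorphic Jacobi form of positive index the $q^0$-part is the constant term alone (the condition $4nm\geq r^2$ forces $r=0$ when $n=0$), so comparing constant terms gives $\alpha+\beta=1$. For a second relation I compare the coefficients of $q\zeta^{4}$: using the expansions $E_{4,1}(\tau,2z)=1+q(\zeta^{\pm4}+56\zeta^{\pm2}+126)+\cdots$ and $E_{4,4}=1+q(8\zeta^{\pm3}+28\zeta^{\pm2}+56\zeta^{\pm1}+56)+\cdots$ recorded in the proof of Theorem \ref{Ftheta8}, the right-hand side contributes $\alpha$, while the left-hand side contributes $\#\{v\in E_8:(v,v)=2,\ (v,u_8)=4\}$, which is $0$. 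Indeed, for $v$ with $(v,v)=2$ the Cauchy--Schwarz inequality gives $(v,u_8)^2\le(v,v)(u_8,u_8)=16$, with equality only when $v$ is proportional to $u_8$, forcing $v=\pm u_8/2$; but $u_8/2\notin E_8$ since $u_8$ is primitive. Hence $\alpha=0$, so $\beta=1$ and $\Theta_{E_8,u_8}=E_{4,4}$.

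Subtracting the two identities and applying Theorem \ref{Ftheta8},
$$
\Theta_{E_8,u_2}(\tau,2z)-\Theta_{E_8,u_8}(\tau,z)=E_{4,1}(\tau,2z)-E_{4,4}(\tau,z)=\vartheta^8(\tau,z),
$$
as claimed. The only delicate point is the vanishing of the $q\zeta^{\pm4}$-coefficient of $\Theta_{E_8,u_8}$, which is exactly where primitivity of $u_8$ enters: if instead $u_8=2w$ with $w$ a root, the vectors $v=\pm w$ satisfy $(v,u_8)=4$ and the asserted identity fails.
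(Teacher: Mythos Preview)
Your proof is correct and follows the same overall strategy as the paper: identify each lattice theta series with the corresponding Jacobi--Eisenstein series and then invoke \eqref{theta8}. The step $\Theta_{E_8,u_2}=E_{4,1}$ via $\dim J_{4,1}=1$ is the same in both.

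The one genuine difference lies in how you pin down $\Theta_{E_8,u_8}=E_{4,4}$ inside the two-dimensional space $J_{4,4}$. The paper compares the $q\zeta^0$--coefficient: it uses the lattice fact $(u_8)^\perp_{E_8}\cong A_7$ and that $A_7$ has $56$ roots, matching $e_{4,4}(1,0)=56$; together with the constant term this forces $\alpha=0$. You instead compare the $q\zeta^4$--coefficient and kill it by Cauchy--Schwarz plus primitivity of $u_8$. Your route is more elementary --- it avoids identifying the orthogonal complement as $A_7$ --- and it makes transparent exactly where primitivity enters (your closing remark about $u_8=2w$ is on point). The paper's route, on the other hand, yields the representation-number byproduct $e_{4,4}(n,0)=\#\{v\in A_7:(v,v)=2n\}$ recorded there.
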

\begin{proof} 
We note that $(u_2)^\perp_{E_8}=E_7$ and $(u_8)^\perp_{E_8}=A_7$.
The lattice $E_7$ (respectively, $A_7$) has $126$ (respectively, $56$) roots.
In the Fourier expansions above we have $e_{4,1}(1,0)=126$ and $e_{4,4}(1,0)=56$. Therefore
$$
E_{4,1}(\tau,z)=\Theta_{E_8,u_2}(\tau,z),
\quad
E_{4,4}(\tau,z)=\Theta_{E_8,u_8}(\tau,z).
$$
In particular, we see that the Fourier coefficients of Jacobi--Eisenstein series 
give the explicit formulas for the number of representations of natural integers
by quadratic forms of odd rank:
$$
e_{4,1}(n,0)=\#\{v\in E_7\,|\, (v,v)=2n\},\quad 
e_{4,4}(n,0)=\#\{v\in A_7\,|\, (v,v)=2n\}.
$$
We note that using the Zagier's $L$-function (see \cite{Z}) we found in 
\cite[Theorem 5.1]{GHS} wonderful formulas for the number of representations by any positive definite quadratic form
with one class in the genus (in particular, a new very short formula for the number of representations by five squares).
\end{proof}

\subsection{Application: $\theta_{a,b}^8(\tau)$}

The identity for $\vartheta^8(\tau,z)$ gives infinitely many explicit formulas for 
the eighth powers of the theta-constants $\theta_{a,b}^8$ with rational characteristics
(see below).

Let $(a,b)\in (\QQ^2\setminus \ZZ^2)\cup \{(0,0)\}$.
We recall the theta-series with characteristic $(a,b)$
\begin{equation}\label{thetaab}
\vartheta_{a,b}(\tau,z)
=\sum_{n\in \ZZ}e^{i\pi(n+a)^2\tau+2i\pi(n+a)(z+b)}.
\end{equation}
This is a holomorphic function on $\HH_1 \times\CC$. We put
$\theta_{a,b}(\tau)=\vartheta_{a,b}(\tau,0)$.

The classical Jacobi theta-series of order $2$ of Introduction and \S 2.1  have the following form in this  notation 
$\vartheta_{11}=\vartheta_{\frac 1{2}, \frac{1}2}$,
$\vartheta_{10}=\vartheta_{\frac 1{2}, 0}$, 
$\vartheta_{01}=\vartheta_{0, \frac{1}2}$ and 
$\theta_{10}=\theta_{\frac 1{2}, 0}$, 
$\theta_{01}=\theta_{0, \frac{1}2}$.
Among these series, there is a special one for $(a,b)=(0,0)$
$$
\vartheta_{00}(\tau,z)=\sum_{n\in \ZZ}e^{i\pi n^2\tau+2i\pi nz}
=\prod_{n\geqslant1}(1-q^n)(1+q^{\frac{2n-1}{2}}\zeta)(1+q^{\frac{2n-1}{2}}\zeta^{-1}).
$$
All the theta-series with characteristics can be expressed by means of 
$\vartheta_{00}$
$$
\vartheta_{a,b}(\tau,z)
=e^{2i\pi ab}q^{\frac{a^2}{2}}\zeta^{a}
\vartheta_{00}(\tau,z+{a}\tau+{b})=e^{i\pi ab}\vartheta_{00}|_{\frac{1}2,\frac{1}2}
[\left(\begin{smallmatrix}\ b\\ -a\end{smallmatrix}\right);0]
$$
(see \cite[(28)]{CG1} and the definition of the slash operator (\ref{slash2})). 
We will use the basic Jacobi form $\vartheta=-i\vartheta_{11}(\tau,z)$ 
since
$$
\vartheta|_{\frac{1}2,\frac{1}2}
[(\begin{smallmatrix}\ 1/2\\ -1/2\end{smallmatrix})]=
e^{\frac{3i\pi}{4}}\vartheta_{00}(\tau,z).
$$
Therefore we have
$$
\vartheta^8|_{4,4}
[\left(\begin{smallmatrix}\ b+1/2\\ -a-1/2\end{smallmatrix}\right);0]=
e^{4i\pi (a-b)}\vartheta^8|_{4,4}
[(\begin{smallmatrix}\ 1/2\\ -1/2\end{smallmatrix});0]
[\left(\begin{smallmatrix}\ b\\ -a\end{smallmatrix}\right);0]=
e^{4\pi i (-2ab+a-b)}\vartheta^8_{a,b}.
$$
In particular, using the notation of Proposition \ref{pullback}, we get
\begin{equation}\label{thetaabslash}
\vartheta^8_{a,b}(\tau,z)=e^{4\pi i(2ab-a+b)}\vartheta^8|_{X},
\qquad X=\left(\begin{smallmatrix}\ b+1/2\\ -a-1/2\end{smallmatrix}\right)
\end{equation}
or
\begin{equation*}
\theta^8_{a,b}(\tau)=e^{8\pi i(2ab+b)}q^{(1+2a)^2}
\vartheta^8(\tau,(a+\tfrac{1}{2})\tau+b+\tfrac{1}{2}).
\end{equation*}
Using the Fourier expansion of $\vartheta^8(\tau,z)$ from Theorem \ref{Ftheta8}
we obtain the Fourier expansion at infinity of
$\vartheta^8_{a,b}(\tau,z)$. We note that in this way we can get the Fourier 
expansions of $\theta^8_{a,b}(\tau)$ at {\bf all cusps} according to Corollary 
\ref{modular vector}.

\begin{corollary} 
We have the following formula for the eighth power of the Dedekind $\eta$-function
(see (\ref{f4}) of Theorem 3.1)
$$
\prod_{n=1}^{\infty}(1-q^n)^{8}=\sum_{n=0}^{\infty}\,
\biggl(\sum_{\substack{3m+2r+5=n \\ 16m\geq r^2}}f_4(m,r) \biggr)q^n.
$$
\end{corollary}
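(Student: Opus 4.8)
The final statement is the Corollary giving a Lambert-series-free formula for $\prod_{n\ge1}(1-q^n)^8 = \eta^8(\tau)q^{-1/3}$ in terms of the Fourier coefficients $f_4(m,r)$ of $\vartheta^8(\tau,z)$. The strategy is to extract $\eta^8$ from $\vartheta^8$ by a specialisation of the elliptic variable $z$ that kills the infinite product over $(1-q^{n-1}\zeta)$ and $(1-q^n\zeta^{-1})$ while leaving $(1-q^n)^8$. Recall from Theorem \ref{Ftheta8} that
\[
\vartheta^8(\tau,z)=q\,\zeta^{-4}\prod_{n\ge1}(1-q^{n-1}\zeta)^8(1-q^n\zeta^{-1})^8(1-q^n)^8.
\]
Here the factor with $n=1$ in the first product is $(1-\zeta)^8$, so setting $\zeta=1$, i.e. $z=0$, would kill the form identically — we need a shift. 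The natural choice is to take $\zeta = q^{-1}$ (that is, $z = -\tau$, or equivalently by periodicity $z=-\tau+1$), which turns the $n=1$ term $(1-q^{0}\zeta)^8$ into a harmless $(1-q^{-1})^8$ rather than zero, but makes the $n=2$ factor $(1-q^{1}\zeta)^8 = (1-q^0)^8 = 0$. So a single monomial shift does not work directly; instead one uses the quasi-periodicity of $\vartheta$ to relate $\vartheta(\tau,z)$ near such a point to $\vartheta$ at $z$ close to $0$ and reads off the product.

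A cleaner route, and the one I would actually carry out, is purely formal: write $z$ so that $\zeta^{-1}=q^{a+1/2}$-type substitutions are avoided and instead note that the Corollary is literally the statement $\theta_{a,b}^8$-style pullback from the previous subsection with the specific characteristic making $\vartheta^8$ collapse to $\eta^8$. Concretely, from the identity in \S 3.2,
\[
\theta_{a,b}^8(\tau)=e^{8\pi i(2ab+b)}q^{(1+2a)^2}\vartheta^8\!\left(\tau,(a+\tfrac12)\tau+b+\tfrac12\right),
\]
the product formula for $\vartheta^8$ shows that at $z=(a+\tfrac12)\tau+b+\tfrac12$ with $\zeta = e^{2\pi i z}$, the terms $(1-q^{n-1}\zeta)^8(1-q^n\zeta^{-1})^8$ over all $n\ge1$ combine (after absorbing the $q$ and $\zeta^{-4}$ prefactors) into an explicit theta-product; choosing $a=-1/2$, $b=1/2$ collapses the variable part so that $\vartheta^8$ evaluated there is $q\cdot \prod(1-q^n)^{24}\cdot(\text{unit})$ — but more to the point one should simply take the \emph{first} product factor's vanishing as the sign that we want the Taylor expansion in $z$ about $z=0$ rather than a value. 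So the honest plan:

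\textbf{Step 1.} Start from $\vartheta^8(\tau,z)=\sum_{n,r}f_4(n,r)q^n\zeta^r$ and from the product formula. Divide both sides by $(1-\zeta)^8$ — the unique factor that vanishes at $\zeta=1$ — to get
\[
\frac{\vartheta^8(\tau,z)}{(1-\zeta)^8}=q\,\zeta^{-4}\prod_{n\ge1}(1-q^n\zeta)^8(1-q^n\zeta^{-1})^8(1-q^n)^8.
\]
\textbf{Step 2.} Now set $\zeta=1$ (i.e. $z\to0$): the right-hand side becomes $q\prod_{n\ge1}(1-q^n)^{24}=\Delta(\tau)$, recovering a known identity, which is \emph{not} quite what the Corollary asks. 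The Corollary wants $\prod(1-q^n)^8$, not $\prod(1-q^n)^{24}$, so the correct specialisation must kill \emph{two} of the three exponent-$8$ blocks, not one. The right move is therefore $\zeta = q^{1/2}$ or, cleaner, to use the relation $\vartheta(\tau,z)$ vs. translates: set $z=\tfrac{\tau+1}{2}$-type characteristics. Actually the neat identity is: with $\zeta=q^{1/2}$ one has $(1-q^{n-1}\zeta)=(1-q^{n-1/2})$ and $(1-q^n\zeta^{-1})=(1-q^{n-1/2})$, so
\[
\vartheta^8(\tau,\tfrac{\tau}{2})=q\cdot q^{-2}\prod_{n\ge1}(1-q^{n-1/2})^{16}(1-q^n)^8,
\]
which still is not $\prod(1-q^n)^8$ alone. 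The genuinely correct specialisation: the Corollary's exponents $3m+2r+5=n$ and the shift by $q^{5/3}$ hidden in $\eta^8=q^{1/3}\prod(1-q^n)^8$ tell us we are evaluating at $z$ with $\zeta=q^{-?}$ so that $q^n\zeta^r$ contributes $q^{n + (\text{something})r}$; matching the exponent $3m+2r+5$ forces $\zeta \mapsto q^{2}$ after an overall shift. So: \textbf{take $z=2\tau$**, hence $\zeta=q^2$, compute}
\[
\vartheta^8(\tau,2\tau)=q\,q^{-8}\prod_{n\ge1}(1-q^{n+1})^8(1-q^{n-2})^8(1-q^n)^8.
\]
The middle block $(1-q^{n-2})^8$ over $n\ge1$ has a zero at $n=2$ ($q^0$), so again vanishing — confirming that the correct reading is that $\vartheta^8(\tau,2\tau)\equiv0$ as a function but $q^{-1}\zeta^4\vartheta^8$, divided by the vanishing factors, is finite. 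Dividing the product by $(1-q^{0})^8$ (the $n=2$ middle factor) and by $(1-q^{-1})^8,(1-q^{-2})^8$ (the $n=1$ middle factor and...) — one is led, after cancelling exactly the two problematic blocks against $q$-power prefactors, to
\[
\prod_{n\ge1}(1-q^n)^8 = \pm q^{-5/3}\,\eta^8(\tau)\cdot(\text{elementary factor}) = \lim \frac{\vartheta^8(\tau,z)}{(\text{vanishing prefactor})},
\]
and the $q$-exponent bookkeeping gives precisely the shift $n\mapsto 3m+2r+5$ once one substitutes $\zeta=q^2$ into $\sum f_4(m,r)q^m\zeta^r = \sum f_4(m,r)q^{m+2r}$, multiplies by the compensating $q^{-5}$ from $q\cdot\zeta^{-4}=q^{-7}$ balanced against $q^{1/3}$-normalisation, and divides by the finitely many vanishing local factors.

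\textbf{Step 3 (assembling).} Substitute $\zeta=q^2$ into $\vartheta^8(\tau,z)=\sum_{n,r}f_4(n,r)q^n\zeta^r$, obtaining $\sum_{n,r}f_4(n,r)q^{n+2r}$, restricted to $16n\ge r^2$. Independently substitute $\zeta=q^2$ (i.e. $z=2\tau$) into the product formula, carefully isolate the finitely many factors that would cause a spurious zero or pole — these come from the terms $(1-q^{n-1}\zeta)^8$ and $(1-q^n\zeta^{-1})^8$ that degenerate to $(1-q^0)^8$ — divide them out symmetrically from both the product side and (as a formal power-series identity, after multiplying through by the appropriate power of $q$) from the Fourier side, and observe that what survives on the product side is exactly $q^{5}\prod_{n\ge1}(1-q^n)^{8}$ (the exponent $5$ matching $q\cdot\zeta^{-4}$ evaluated at $\zeta=q^2$ together with the $q$-shifts absorbed from the degenerate factors). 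Equating coefficients of $q^n$ on both sides, the coefficient of $q^{n+5} \to q^N$ becomes $\sum_{3m+2r+5=N,\,16m\ge r^2}f_4(m,r)$ — wait, the factor $3$ comes from the need to re-expand in the variable $q$ after the product $\prod(1-q^n)^8$ on the left already starting at the right power; tracking the arithmetic, the exponent of $q$ attached to $f_4(m,r)$ after all shifts is $m + 2r + 5$ on one normalisation and $3m+2r+5$ after clearing a $q^{2m}$ Jacobi-triple-product self-cancellation — this factor-of-$3$ discrepancy is the one delicate point.

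\textbf{Main obstacle.} The only real difficulty is the bookkeeping in Step 2–3: identifying exactly which specialisation $z=z_0$ makes $\vartheta^8$ collapse to $q^{\ast}\prod(1-q^n)^8$ (as opposed to $\Delta=q\prod(1-q^n)^{24}$ or mixed half-integer products), correctly accounting for the overall $q$- and $\zeta$-power prefactors $q\zeta^{-4}$, and pinning down the shift $n = 3m+2r+5$ with the right coefficient $3$ in front of $m$ — this factor $3$ must come from the structure of the remaining product after two of the three degree-$8$ blocks have been removed and the Jacobi triple product collapses; everything else (the convergence, the range $16m\ge r^2$, the appeal to formula (\ref{f4})) is immediate from Theorem \ref{Ftheta8}.
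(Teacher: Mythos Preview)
Your proposal has a genuine gap: you never find the correct specialisation, and the ``factor-of-$3$ discrepancy'' you flag at the end is not a bookkeeping detail but the sign that your approach cannot work as stated. Every substitution you try keeps $\tau$ fixed and only varies $z$. With $\tau$ fixed, the three product blocks in
\[
\vartheta^8(\tau,z)=q\,\zeta^{-4}\prod_{n\ge1}(1-q^{n-1}\zeta)^8(1-q^n\zeta^{-1})^8(1-q^n)^8
\]
all run over the \emph{same} lattice $q^{\ZZ}$, so any choice $\zeta=q^k$ forces collisions (vanishing factors) or leaves you with $\prod(1-q^n)^{24}$ or a mixed half-integer product. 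No amount of dividing out finitely many degenerate factors will isolate $\prod_{n\ge1}(1-q^n)^8$, and in particular no $q^{2m}$ ``self-cancellation'' produces the coefficient $3$ in front of $m$.

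The paper's proof instead rescales \emph{both} variables: set $\tau\mapsto 3\tau$ and $z\mapsto 2\tau$, so that $q\mapsto q^3$ and $\zeta\mapsto q^2$. Then
\[
\vartheta^8(3\tau,2\tau)=q^{3}\cdot q^{-8}\prod_{n\ge1}(1-q^{3n-1})^8(1-q^{3n-2})^8(1-q^{3n})^8
=q^{-5}\prod_{m\ge1}(1-q^m)^8,
\]
because $\{3n-1,\,3n-2,\,3n:n\ge1\}$ partitions the positive integers. On the Fourier side the same substitution sends $f_4(m,r)q^m\zeta^r$ to $f_4(m,r)q^{3m+2r}$, and multiplying by $q^5$ gives exactly the exponent $3m+2r+5$. (The paper arrives at this substitution via the classical identity $\vartheta_{1/6,1/2}(3\tau)=e^{\pi i/6}\eta(\tau)$, i.e.\ by recognising $\eta$ as a theta-constant of order~$6$.) The factor $3$ you could not account for is precisely the rescaling $\tau\to 3\tau$, not a cancellation inside the product.
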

\begin{proof} By \cite[P.71]{Mu}, we have
$
\vartheta_{1/6,1/2}(3\tau)=e^{\frac{\pi i}{6}}\eta(\tau)
$.
Hence we proved the corollary. Moreover we obtain a Jacobi-Eisenstein type expression 
for this product
\begin{equation}\label{eta8}
q^{-\frac{1}{3}}\eta^8(\tau)=q^5\vartheta^8(3\tau,2\tau) =E_{4,1}(3\tau,\tau)-q^5E_{4,4}(3\tau,2\tau).
\end{equation}
\end{proof}

Next, we consider the special values of $\vartheta^8(\tau,z)$ in the  points $z$
of order two. 
For that we have to analyse the corresponding specialisations
of the Jacobi-Eisenstein series.
By Proposition \ref{pullback}, we have 
$E_{2k,2m}(\tau, \frac{1}2)\in M_{2k}(\Gamma_0(2))$.
For $2k=4$, $6$ this space contains only two Eisenstein series and no cusp forms.
The first coefficients show that  
\begin{align*}
E_{4,2}(\tau,\tfrac{1}2)&=E_{4,4}(\tau,\tfrac{1}2)=
\tfrac{16}{15}E_4(2\tau)-\tfrac{1}{15}E_4(\tau),\\
E_{6,2}(\tau,\tfrac{1}2)&=\tfrac{64}{63}E_6(2\tau)-\tfrac{1}{63}E_6(\tau),\\
E_{6,4}(\tau,\tfrac{1}2)&=\tfrac{127}{63}E_6(\tau)-\tfrac{64}{63}E_6(2\tau).
\end{align*}
By the previous formulas, we obtain the following identities for Cohen's numbers
for odd $n$:
\begin{align*}
&\sum_{\substack{8n>r^2\\ r \; even}}H(3,8n-r^2)=-4\sigma_3(n),&  &\sum_{\substack{8n>r^2\\ r \; odd}}H(3,8n-r^2)=-\frac{32}{7}\sigma_3(n),\\
&\sum_{\substack{8n>r^2\\ r \; even}}H(5,8n-r^2)=62\sigma_5(n),&  
&\sum_{\substack{8n>r^2\\ r \; odd}}H(5,8n-r^2)=64\sigma_5(n).
\end{align*}
In this way we obtain a new proof of  \cite[Corollary 5.4]{C} for any $n$.

For odd indices, we have  $E_{4,m}(\tau,\frac{1}2)\in M_4(\Gamma_0(4))$
(see the remark after Corollary 2.3).
One knows that $E_4(\tau)$, $E_4(2\tau)$, $E_4(4\tau)$ form a basis of 
$M_4(\Gamma_0(4))$. By the formulas for the Fourier expansions, we get
$$
E_{4,1}(\tau,\frac{1}2)=E_{4,3}(\tau,\frac{1}2)=\frac{1}{15}E_4(\tau)
-\frac{6}{5}E_4(2\tau)+\frac{32}{15}E_4(4\tau).
$$
Using
$$E_4(\tau)=E_{4,1}(\tau,0)=\sum_{n\geq 0}\left(-252\sum_{4n\geq r^2}
H(3,4n-r^2)\right)q^n,
$$
we get 
\begin{equation*}
\sum_{\substack{0<r<2\sqrt{n}\\ \textit{r is odd}}}H(3,4n-r^2)=-\frac{2}
{9}\sigma_3(n)-\frac{2}{7}\sigma_3(\frac{n}{2})+\frac{32}{63}\sigma_3(\frac{n}{4})
\end{equation*}
($\sigma_3(x)=0$ if $x$ is not an integer). The last formula was proved in \cite{C}
by another method.
\smallskip

For $z=\tfrac 12$  we get the Fourier expansion of $\theta_{10}^8(\tau)$
in terms of $H(3,N)$
\begin{align*}
\vartheta^8(\tau,\frac{1}2)=&\sum_{n\ge 1}\sum_{16n\ge r^2}
(-1)^rf_4(n,r)q^n=\theta_{10}^8(\tau)=2^8q\sum_{n> 0}\delta_8(n)q^n\\
=&E_4(\tau)-E_{4,4}(\tau,\frac{1}2)=
\frac{16}{15}(E_4(\tau)-E_4(2\tau)).
\end{align*}
The last expression gives  the Jacobi formula for $\theta_{10}^8(\tau)$ 
from the introduction. Moreover, when $n$ is odd and not a full square, 
we have
$$
2^8\delta_8(n-1)=\frac{7}{2}\sum_{16n> r^2}  (-1)^r H(3,16n-r^2)
-\frac{511}{2}\sum_{4n> r^2} H(3,4n-r^2).
$$
In the same way we get
\begin{align*}
\theta_{01}^8(2\tau)=&1+\sum_{n>0}(-1)^nr_8(n)q^n=q^2\vartheta(2\tau,\tau)^8=
E_4(2\tau)-q^2E_{4,4}(2\tau,\tau)\\
=&\sum_{n=0}^{\infty}\,
\biggl(\sum_{\substack{2m+r+2=n \\ 16m\geq r^2}}f_4(m,r) \biggr)q^n=-\frac{1}{15}E_4+\frac{16}{15}E_4(2\tau).
\end{align*}
Moreover, when $n$ is odd, we obtain a formula for the number of representations
by $8$ squares using $H(3,N)$ 
$$
r_8(n)=-\frac{7}{2}\sum_{\substack{2m+r+2=n \\ 16m> r^2}}H(3,16m-r^2).
$$

{\bf Remarks.} 
{\bf 1.} We note that for any non-zero rational characteristic 
$X={}^t(b,-a)$ the corresponding pullback $\wp(\tau, a\tau+b)$ is a modular form of weight $2$ with respect to the modular group $\Gamma_X$. Therefore the formula 
(\ref{wptheta8}) of Theorem 3.1 gives the  Fourier expansion
of $12\wp(\tau,a\tau+b)\theta_{a,b}(\tau)$.

{\bf 2.} For the points of order $2$ the values $\wp(\tau,a\tau+b)$ are the first 
generators
of weight $2$ of the corresponding graded rings of modular forms with respect to  
$\Gamma_0(2)$, $\Gamma^0(2)$ or $\Gamma_\theta$. For example, 
$
\varepsilon_2(\tau)=-6\wp(\tau, \frac{1}{2})
$
where 
$\varepsilon_2(\tau)$ is the Eisenstein series from the introduction
(see \cite{S1}).
Using the formula (\ref{wptheta8}) of Theorem 3.1 we get 
\begin{align*}
2\varepsilon_2(\tau)\vartheta^8(\tau,\frac{1}2)&
=\theta_{10}^8(\theta_{00}^4+\theta_{01}^4)=E_{6,4}(\tau,\frac{1}{2})-E_6(\tau)=\frac{64}{63}(E_6(\tau)-E_6(2\tau)),\\
\theta_{00}^8(\theta_{01}^4-\theta_{10}^4)&=E_6(\tau)-qE_{6,4}(\tau,\frac{\tau +1}{2}).
\end{align*}

{\bf 3.}
We note that the indentity (\ref{wptheta8}) of Theorem 3.1 yields a new representation
of the basic weak Jacobi form $2\phi_{0,1}$ which is equal to the elliptic genus of a K3 surface (see \cite{G1}). We hope that this representation might be useful in some moonshine constructions (see, \cite{E}).

\subsection{Application: representations of integers as sums of eight higher
figurate numbers}

The higher figurate numbers are defined by the functions ($a\geq 1$ and $n\in \ZZ$):
$$
f_a(n)=\frac{an^2+(a-2)n}{2}.
$$
Note that $a=1$ gives the triangular numbers and $a=2$ gives the squares
(see Introduction). It is a classical problem in number theory to define the number 
of representations of integral numbers by figurate numbers.
See \cite[\S 5]{ORW} where modular forms of half-integral weights are used.
 
Note that
\begin{align*}
&R_{a,m}(n)=\#\{(x_1,\ldots,x_m)\in \ZZ^m \,| \, n=f_a(x_1)+\ldots+f_a(x_m) \},\\
&R_{a,m}^{odd}(n)=\#\{(x_1,\ldots,x_m)\in \ZZ^m \,| \, n=f_a(x_1)+\ldots+f_a(x_m),
\ x_i \, {\rm is \  odd}\}.
\end{align*}
In this subsection, we give explicit formulas for $R_{a,8}$ and $R_{a,8}^{odd}$ for any $a$ by using Theorem 3.1. We have the following generating functions of the higher figurate numbers:
\begin{align*}
&\vartheta_{00}(a\tau,\frac{a-2}{2}\tau)=
\sum_{n\in\ZZ}e^{i\pi(n^2a\tau+n(a-2)\tau)}=
\sum_{n\geq 0}R_{a,1}(n)q^n,\\
&\vartheta_{10}(4a\tau,(a-2)\tau)=
\sum_{n\in\ZZ}e^{i\pi((n+\frac{1}2)^2 4a\tau+2(n+\frac{1}2)(a-2)\tau)}=
\sum_{n\geq 0}   R_{a,1}^{odd}(n)q^n
\end{align*}
where $\vartheta_{10}=\vartheta_{\frac{1}20}$ (see \S 3.2).
Therefore we have 
\begin{align*}
&\vartheta_{00}^m(a\tau,\frac{a-2}{2}\tau)=\sum_{n\geq 0}R_{a,m}(n)q^n,\\
&\vartheta_{10}^m(4a\tau,(a-2)\tau)=\sum_{n\geq 0}R_{a,m}^{odd}(n)q^n.
\end{align*}
We want to mention that our approach is different from \cite{ORW}. 
In \cite[Theorem 10]{ORW}, the generating function
$\vartheta_{00}(a\tau,\frac{a-2}{2}\tau)$ was represented as the quotient
$\frac{\eta(\tau)\eta_{a,1}(2\tau)}{\eta_{a,1}(\tau)}$
where $\eta_{a,1}(\tau)$ is a generalized Dedekind $\eta$-functions.

By (\ref{thetaab})--(\ref{thetaabslash}), we have
\begin{align*}
&\vartheta_{00}^8\left( a\tau,\frac{a-2}{2}\tau\right)=q^{3a-4}\vartheta^8
\left( a\tau,(a-1)\tau+\frac{1}{2} \right),\\
&\vartheta_{10}^8\left(4a\tau,(a-2)\tau\right)=\vartheta^8
\left(4a\tau,(a-2)\tau+\frac{1}{2} \right).
\end{align*}
Thus applying Theorem 3.1, we get the following result.
\begin{corollary}
For any positive integer $a$, we have
\begin{align*}
&R_{a,8}(n)=\sum_{\substack{am+r(a-1)+3a-4=n\\16m\geq r^2}}(-1)^rf_4(m,r),\\
&R_{a,8}^{odd}(n)=\sum_{\substack{4am+r(a-2)=n\\16m\geq r^2}}(-1)^rf_4(m,r).
\end{align*}
In particular, we have the following more familiar formulas.
\smallskip

\noindent
1) When $a$ is even and $n$ is odd
$$
R_{a,8}(n)=-\frac{7}{2}\sum_{\substack{am+r(a-1)+3a-4=n\\16m> r^2,\, r\  
{\rm is\, odd}}}H(3,16m-r^2).
$$
2) When $a$ is odd and $n$ is even
\begin{align*}
R_{a,8}(n)=&\frac{7}{2}\sum_{\substack{am+r(a-1)+3a-4=n\\16m> r^2,\  m\ 
{\rm is\ odd}}}(-1)^r  H(3,16m-r^2)\\
& -\frac{511}{2}\sum_{\substack{am+2s(a-1)+3a-4=n\\4m> s^2,\, m\ {\rm is\  odd}}}
H(3,4m-s^2)\\
&+\#\{(m,r)\in\ZZ^2\, : \, 16m=r^2,\, am+r(a-1)+3a-4=n\}.
\end{align*}
3) When $a$ is odd and $n$ is odd 
$$ 
R_{a,8}^{odd}(n)=-\frac{7}{2}\sum_{\substack{4am+r(a-2)=n\\16m> r^2,\,r\ 
{\rm  is\  odd}}}H(3,16m-r^2).
$$
\end{corollary}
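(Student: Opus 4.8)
The plan is to read off the two main formulas directly from the two specialisations displayed just before the statement, and then to obtain items 1)--3) by a parity-and-divisor analysis of \eqref{f4}. For the first formula I would begin with the generating-function identity $\vartheta_{00}^8(a\tau,\tfrac{a-2}{2}\tau)=\sum_{n\ge 0}R_{a,8}(n)q^n$ recalled above, rewrite it through $\vartheta_{00}^8(a\tau,\tfrac{a-2}{2}\tau)=q^{3a-4}\vartheta^8(a\tau,(a-1)\tau+\tfrac{1}{2})$ (which comes from \eqref{thetaab} and \eqref{thetaabslash}), and then substitute the Fourier expansion $\vartheta^8(\tau,z)=\sum_{m,r}f_4(m,r)q^m\zeta^r$ of Theorem \ref{Ftheta8}. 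Under $\tau\mapsto a\tau$ and $z\mapsto(a-1)\tau+\tfrac{1}{2}$ one has $q^m\mapsto q^{am}$ and $\zeta^r=e^{2\pi i rz}\mapsto(-1)^r q^{(a-1)r}$, so
$$
q^{3a-4}\vartheta^8(a\tau,(a-1)\tau+\tfrac{1}{2})=\sum_{m,r}(-1)^r f_4(m,r)\,q^{am+(a-1)r+3a-4},
$$
and comparing the coefficient of $q^n$ (with $f_4$ supported on $16m\ge r^2$) yields the stated formula for $R_{a,8}(n)$. The formula for $R_{a,8}^{odd}(n)$ follows the same way from $\vartheta_{10}^8(4a\tau,(a-2)\tau)=\vartheta^8(4a\tau,(a-2)\tau+\tfrac{1}{2})$, now with $\tau\mapsto 4a\tau$ and $z\mapsto(a-2)\tau+\tfrac{1}{2}$.

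For the three ``more familiar'' formulas I would insert the explicit value of $f_4(m,r)$ from \eqref{f4} into the sums just obtained and sort the terms by parity. In items 1) and 2), which concern $R_{a,8}$, the relevant constraint is $am+r(a-1)+3a-4=n$; in item 3), which concerns $R_{a,8}^{odd}$, it is $4am+r(a-2)=n$. Reducing these modulo $2$ and using the parity hypotheses pins down the parities of $m$ and $r$: for $a$ even and $n$ odd the number $r$ is forced odd; for $a$ odd the number $m$ is forced odd when $n$ is even; and in item 3) the number $r$ is forced odd when $n$ is odd. Whenever $r$ is odd, $16m-r^2$ is odd, so $H(3,\tfrac{16m-r^2}{4})=0$, $\gcd(m,r,4)=1$ (hence the divisor sum in \eqref{f4} collapses to its $d=1$ term), and the boundary locus $16m=r^2$ is empty; thus $f_4(m,r)=\tfrac{7}{2}H(3,16m-r^2)$, and with $(-1)^r=-1$ this gives items 1) and 3). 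Whenever $m$ is odd one again has $\gcd(m,r,4)=1$, so $f_4(m,r)=-\tfrac{511}{2}H(3,\tfrac{16m-r^2}{4})+\tfrac{7}{2}H(3,16m-r^2)$ for $16m>r^2$; here the first summand is nonzero only for $r$ even, where writing $r=2s$ gives $\tfrac{16m-r^2}{4}=4m-s^2$ and $(-1)^r=1$, while on the boundary $16m=r^2$ one has $f_4(m,r)=1$ (since $m$ is odd) and $r=4\sqrt{m}$ is even; collecting these three contributions produces item 2).

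I do not expect a genuine obstacle beyond careful bookkeeping: Theorem \ref{Ftheta8} does all the real work, the two main identities are direct substitutions, and the only delicate point in items 1)--3) is tracking when the divisor sum in \eqref{f4} reduces to a single term and when the locus $16m=r^2$ contributes an extra count.
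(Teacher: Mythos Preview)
Your proposal is correct and follows exactly the paper's approach: the paper itself says only ``Thus applying Theorem 3.1, we get the following result'' after displaying the two specialisations of $\vartheta_{00}^8$ and $\vartheta_{10}^8$, so your derivation of the two main formulas by substituting the Fourier expansion of $\vartheta^8$ into those specialisations is precisely what is intended. Your parity analysis for items 1)--3) is more detailed than anything the paper writes out, but it is the natural (and only) way to extract those special cases from \eqref{f4}, and your bookkeeping---the vanishing of $H(3,\tfrac{16m-r^2}{4})$ for $r$ odd, the collapse of the divisor sum when $\gcd(m,r,4)=1$, and the boundary contribution on $16m=r^2$---is all correct.
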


\subsection{Application: $\tau(n)$}

Let us consider the automorphic correction of Jacobi forms 
(see \cite[Proposition 1.5]{G2}). 
Suppose that $\varphi \in J_{2k, m}$ has $a(0,0)=0$. We consider
$$  
 \operatorname{exp}(-8m\pi^2  G_2(\tau)z^2 )\varphi(\tau,z)= \sum_{n\geq 0}h_{n} (\tau ) z^{n}
$$
where $G_2(\tau)= -\frac{1}{24} + \sum_{n \geq 1} \sigma_1 (n) q^n $ is the quasi-modular Eisenstein series of weight $2$. Then we can check that the function 
$h_{n}(\tau)$ is  modular cusp form of weight $2k+n$ 
with respect to $SL_2(\ZZ)$. 
For $\varphi=\vartheta^8$ we have
\begin{gather*}
\exp(-32\pi^2  G_2(\tau)z^2 )\vartheta^8(\tau,z)=\\
\left( \frac{\partial^8 \vartheta^8}{\partial z^8}(\tau,0) \right)z^8 +\left( \frac{\partial^{10} \vartheta^8}{\partial z^{10}}-2880\pi^2G_2(\tau)\frac{\partial^8 \vartheta^8}{\partial z^8} \right)(\tau,0)z^{10} +\cdots .
\end{gather*}
Thus 
$$ \frac{\partial^8 \vartheta^8}{\partial z^8}(\tau,0)=a\Delta(\tau), \quad \frac{\partial^{10} \vartheta^8}{\partial z^{10}}(\tau,0)=2880\pi^2G_2(\tau)\frac{\partial^8 \vartheta^8}{\partial z^8}=b \Delta^{'}(\tau)$$ 
where $a,b$ are some constants. Hence we have
\begin{equation*}
\tau(n)= \tfrac{1}{8!}\sum_{r\in\ZZ, 16n\geq r^2}r^8f_4(n,r),\quad
n\tau(n)= \tfrac{3}{10!}\sum_{r\in\ZZ, 16n\geq r^2}r^{10}f_4(n,r).
\end{equation*}
In particular, when $n$ is odd and not an odd square, we have
\begin{equation}
\tau(n)=-\tfrac{73}{45}\sum_{r\in\ZZ, 4n> r^2}H(3,4n-r^2)r^8
+\tfrac{1}{11520}\sum_{r\in\ZZ, 16n> r^2}H(3,16n-r^2)r^8.
\end{equation}
In the same way, using (\ref{wptheta8}), we get a formula for $\tau(n)$ in terms of 
$H(5,N)$
\begin{equation*}
\tau(n)= \tfrac{1}{6!\cdot 12}\sum_{r\in\ZZ, 16n\geq r^2}r^6f_6(n,r),\quad
n\tau(n)= \tfrac{1}{8!\cdot 4}\sum_{r\in\ZZ, 16n\geq r^2}r^{8}f_6(n,r).
\end{equation*}
In particular, when $n$ is odd and is not an odd square, we have
\begin{equation*}
\tau(n)=-\tfrac{1057}{1080}\sum_{r\in\ZZ, 4n> r^2}H(5,4n-r^2)r^6+\tfrac{1}{69120}\sum_{r\in\ZZ, 16n> r^2}H(5,16n-r^2)r^6.
\end{equation*}

\section{Explicit formulas with cusp terms} 

\subsection{The Jacobi-Eisenstein series $E_{8,2}$, $E_{6,3}$ and $\eta^{12}(\tau)$}

The Jacobi--Eisenstein series  $E_{8,2}$ and $E_{6,3}$ are special because
${\rm dim}\,J_{8,2}^{\rm cusp}={\rm dim}\,J_{6,3}^{\rm cusp}=1$.
The corresponding Jacobi cusp forms are $\eta^{12}(\tau)\vartheta^4(\tau,z)$ and 
$\eta^6(\tau)\vartheta^6(\tau,z)$. 
Their values at $z=\frac{1}2$ are the first cusp forms 
$\eta^{12}\theta_{10}^4$ and $\eta^{6}\theta_{10}^6$ 
with respect to $\Gamma_0(2)$ and $\Gamma_0(4)$, respectively. 
We note that
\begin{equation*}
\eta^{12}(\tau)\theta_{10}^4(\tau)=2^4\eta^8(\tau)\eta^8(2\tau)
\quad{\rm and}\quad \eta^{6}(\tau)\theta_{10}^6(\tau)=2^6\eta^{12}(2\tau).
\end{equation*}

For any even index $E_{8,2m}(\tau,\frac{1}2)\in M_8(\Gamma_0(2))$ according to 
Proposition \ref{pullback} and we know that 
$(E_8(\tau), E_8(2\tau), \eta^{12}\theta_{10}^4)$ is a basis of 
$M_8(\Gamma_0(2))$.
Using  \S 2.3 we can find the Fourier expansions of $E_{8,2}$, $E_{8,4}$ and 
$E_{8,8}$. In particular,
$$ 
E_{8,2}(\tau,\frac{1}{2})=\sum_{n=0}^{\infty}\biggl(\sum_{r\in \ZZ,\,r^2\leq 8n}
\frac{(-1)^r}{129}\sum_{d\vert (n,r,2)} d^7 \frac{H(7,\frac{8n-r^2}{d^2} )}
{\zeta(-13)}\biggr)q^n. 
$$
By a direct calculation with Fourier expansions of Jacobi-Eisenstein series,
we get 
\begin{proposition}\label{E82}
The following formulas are valid
\begin{align*}
E_{8,2}(\tau, \tfrac{1}2)&=\tfrac{256}{255}E_8(2\tau)-\tfrac{1}{255}E_8(\tau)+
\tfrac{2160}{731}\,\eta^{12}\theta_{10}^4,\\
E_{8,4}(\tau, \tfrac{1}2)&=\tfrac{256}{255}E_8(2\tau)-\tfrac{1}{255}E_8(\tau)-
\tfrac{135}{731}\,\eta^{12}\theta_{10}^4,\\
E_{8,8}(\tau, \tfrac{1}2)&=\tfrac{256}{255}E_8(2\tau)-\tfrac{1}{255}E_8(\tau)+\tfrac{135}{731\cdot 16}\,\eta^{12}\theta_{10}^4.
\end{align*}
Let $\eta^{12}\theta_{10}^4=16q \prod_{n=1}^{\infty}(1-q^n)^8(1-q^{2n})^8
= \sum_{n=1}^{\infty}a(n)q^n$, then for any odd $n$ we have
$$ 
a(n)=\frac{86}{135}\sigma_7(n)+\frac{17}{6480}\sum_{r^2<8n}(-1)^r\frac{H(7,8n-r^2)}
{\zeta(-13)}.
$$
\end{proposition}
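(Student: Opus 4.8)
The plan is to work in the two-dimensional space $M_8(\Gamma_0(2))$, in which the three forms $E_8(\tau)$, $E_8(2\tau)$ and the unique normalized cusp form $\eta^{12}\theta_{10}^4$ form a basis (as stated just before the proposition). For each even index $m\in\{1,2,4\}$ we know by Proposition \ref{pullback} that $E_{8,2m}(\tau,\tfrac12)$ lies in this space, so each is a uniquely determined rational linear combination $\alpha E_8(2\tau)+\beta E_8(\tau)+\gamma\,\eta^{12}\theta_{10}^4$. First I would compute enough initial Fourier coefficients of each $E_{8,2m}(\tau,\tfrac12)$ to pin down the three coefficients $(\alpha,\beta,\gamma)$ in each case; three equations in three unknowns suffice, so the $q^0,q^1,q^2$ coefficients are enough. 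The $q^0$ coefficient is $1$ and forces $\alpha+\beta=1$ in all three cases; matching $q^1$ and $q^2$ then yields $\beta=-\tfrac1{255}$, $\alpha=\tfrac{256}{255}$, and the stated values of $\gamma$ (namely $\tfrac{2160}{731}$, $-\tfrac{135}{731}$, $\tfrac{135}{731\cdot 16}$).

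The Fourier coefficients of $E_{8,2m}(\tau,\tfrac12)$ are obtained exactly as in the proof of Theorem \ref{Ftheta8}: by $(\ref{Ekm})$ one writes $E_{8,m}$ (for $m=2,4$) in terms of $E_{8,1}$ via the operators $U_d$ and $V_l$, uses $(\ref{Vl})$ to read off the coefficients, and then sets $z=\tfrac12$, which replaces $q^n\zeta^r$ by $(-1)^rq^n$. For $m=1$ one uses $e_{8,1}(n,r)=H(7,4n-r^2)/\zeta(-13)$ from $(\ref{ek1})$ together with $E_{8,1}=E_4E_{4,1}$; in fact the displayed series for $E_{8,2}(\tau,\tfrac12)$ in the text, namely $\sum_n\bigl(\sum_{r^2\le 8n}\tfrac{(-1)^r}{129}\sum_{d\mid(n,r,2)}d^7 H(7,\tfrac{8n-r^2}{d^2})/\zeta(-13)\bigr)q^n$, is precisely this computation and may be quoted directly. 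I would also record the first coefficients $E_8=1+480\sum\sigma_7(n)q^n$ and $\eta^{12}\theta_{10}^4=16q-128q^2+\cdots$ to run the linear algebra.

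For the last assertion about $a(n)$, solve the first displayed identity for $\eta^{12}\theta_{10}^4$:
$$
\tfrac{2160}{731}\,\eta^{12}\theta_{10}^4
=E_{8,2}(\tau,\tfrac12)-\tfrac{256}{255}E_8(2\tau)+\tfrac1{255}E_8(\tau).
$$
Now take the $q^n$-coefficient for $n$ odd. On the right, $E_8(2\tau)$ contributes nothing, $E_8(\tau)$ contributes $\tfrac{480}{255}\sigma_7(n)=\tfrac{32}{17}\sigma_7(n)$, and $E_{8,2}(\tau,\tfrac12)$ contributes $\sum_{r^2<8n}\tfrac{(-1)^r}{129}\,H(7,8n-r^2)/\zeta(-13)$ — the divisor sum over $d\mid(n,r,2)$ collapses to the single term $d=1$ because $n$ is odd, and $r^2=8n$ is impossible for odd $n$. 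Multiplying through by $\tfrac{731}{2160}$ and simplifying the rational constants (using $\tfrac{731}{2160}\cdot\tfrac{32}{17}=\tfrac{86}{135}$ and $\tfrac{731}{2160\cdot 129}=\tfrac{17}{6480}$) gives exactly the claimed formula for $a(n)$.

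I expect the only real obstacle to be bookkeeping: correctly expanding $E_{8,2}$ and $E_{8,4}$ through $(\ref{Ekm})$ and $(\ref{Vl})$ and keeping the rational constants straight, together with verifying the arithmetic identities among the coefficients $\tfrac{256}{255},\tfrac1{255},\tfrac{2160}{731},\ldots$. There is no conceptual difficulty: everything follows from $\dim M_8(\Gamma_0(2))=3$, Proposition \ref{pullback}, and the same mechanism already used to prove Theorem \ref{Ftheta8}.
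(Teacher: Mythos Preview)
Your approach is correct and is exactly the paper's: Proposition~\ref{pullback} puts each $E_{8,2m}(\tau,\tfrac12)$ in $M_8(\Gamma_0(2))$, the basis $(E_8(\tau),E_8(2\tau),\eta^{12}\theta_{10}^4)$ together with the Fourier expansions from \S 2.3 (formulas $(\ref{Ekm})$ and $(\ref{Vl})$) pins down the coefficients, and the formula for $a(n)$ follows from the first identity for odd $n$ just as you wrote (your arithmetic $\tfrac{731}{2160}\cdot\tfrac{32}{17}=\tfrac{86}{135}$ and $\tfrac{731}{2160\cdot129}=\tfrac{17}{6480}$ is right). Two trivial slips to fix: $M_8(\Gamma_0(2))$ is three-dimensional, not two-dimensional (you use three basis vectors, so this is just a typo), and your list ``$m=2,4$'' for the $(\ref{Ekm})$ computation should also include the index $8$.
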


\smallskip

For any odd index $m$ we have $E_{6,m}(\tau,\frac{1}2)\in M_6(\Gamma_0(4))$ 
according to Proposition 2.2 and 
the remark after Corollary 2.3. One  knows that 
$E_6(\tau)$, $E_6(2\tau)$, $E_6(4\tau)$ and $\eta(2\tau)^{12}$ form a basis 
of $M_6(\Gamma_0(4))$. The direct calculation with the first Fourier coefficients 
yields
\begin{proposition}\label{E61}
The following formulas are valid
\begin{align*}
&E_{6,1}(\tau, \tfrac{1}2)=\tfrac{1}{63}E_6(\tau)-\tfrac{22}{21}E_6(2\tau)+\tfrac{128}{63}E_6(4\tau)-144\eta^{12}(2\tau),\\
&E_{6,3}(\tau, \tfrac{1}2)-E_{6,1}(\tau, \tfrac{1}2)=\tfrac{9216}{61}\eta^{12}(2\tau).
\end{align*}
In particular, let $\eta^{12}(2\tau)=q\prod_{n=1}^{\infty}(1-q^{2n})^{12}= \sum_{n=0}^{\infty}b(n)q^{n}$, then for any $n\in \NN$ we have
\begin{align*}
&b(2n+1)=\frac{11}{12}\sum_{8n+4\geq r^2}(-1)^rH(5,8n+4-r^2)-\frac{1}{18}\sigma_5(2n+1),\\
&b(2n)=0: \quad \sum_{\substack{0<r<2\sqrt{2n}\\ \textit{r is odd}}}H(5,8n-r^2)=\frac{31}{33}\sigma_5(2n)+\sigma_5(n)-\frac{64}{33}\sigma_5(\frac{n}{2}).
\end{align*}
(We have $\sigma_5(\frac{n}{2})=0$ for any odd $n$.)
\end{proposition}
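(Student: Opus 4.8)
The plan is to establish the three displayed identities in $M_6(\Gamma_0(4))$ first and then extract the arithmetic consequences for $b(n)$ and the Cohen-number identity. By Proposition \ref{pullback} (applied with $X={}^t(1/2,0)$ and the remark after Corollary \ref{modular vector}), for odd index $m$ the pullback $E_{6,m}(\tau,\tfrac12)$ lies in $M_6(\Gamma_0(4))$. Since $\dim M_6(\Gamma_0(4))=4$, with basis $E_6(\tau)$, $E_6(2\tau)$, $E_6(4\tau)$, $\eta^{12}(2\tau)$, each of $E_{6,1}(\tau,\tfrac12)$ and $E_{6,3}(\tau,\tfrac12)$ is a unique rational linear combination of these four forms, and it suffices to compute enough Fourier coefficients (the $q^0,\dots,q^3$ coefficients, say) to pin down the four rational coefficients and verify the result on a couple more coefficients as a check. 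The Fourier expansion of $E_{6,1}$ comes from (\ref{ek1}) and the table in \cite{EZ}; setting $\zeta=1$ after multiplying by the appropriate $q$-power, equivalently specialising $z=\tfrac12$, amounts to summing $e_{6,1}(n,r)(-1)^r$ over $r$. For $E_{6,3}(\tau,\tfrac12)$ one uses (\ref{Ekm}) and (\ref{Vl}) to write $E_{6,3}$ in terms of $E_{6,1}$ and then specialises; this is the same mechanism used in the proof of Theorem \ref{Ftheta8}. The fact that the difference $E_{6,3}(\tau,\tfrac12)-E_{6,1}(\tau,\tfrac12)$ has vanishing constant term and vanishing $q$-coefficient (both Eisenstein series have constant term $1$ and $E_k$-specialisation $E_6$ at $z=0$, and one checks the $q^1$ term cancels) forces it to be a cusp form in $M_6(\Gamma_0(4))$, hence a multiple of $\eta^{12}(2\tau)$, and a single nonzero coefficient fixes the constant $\tfrac{9216}{61}$.

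For the arithmetic statement I would proceed as follows. Write $E_{6,1}(\tau,\tfrac12)=\sum_n c(n)q^n$. On the one hand, from (\ref{ek1}) we have, for each $n\ge 1$,
$$
c(n)=\sum_{r\in\ZZ,\ 4n\ge r^2}(-1)^r e_{6,1}(n,r)
=\sum_{\substack{r\in\ZZ\\ 4n> r^2}}(-1)^r\frac{H(5,4n-r^2)}{\zeta(-9)}+(\text{boundary terms }4n=r^2).
$$
Wait --- I should be careful: the relevant discriminant here is $8n-r^2$ rather than $4n-r^2$ because after the substitution $z\mapsto\tfrac12$ one is really looking at a form whose level-structure doubles; concretely the specialisation $E_{6,1}(\tau,\tfrac12)$ has $q$-expansion governed by $4n-r^2$ directly, but the statement of the Proposition is phrased with $8n+4-r^2$, which indicates the author first passes to $\eta^{12}(2\tau)=q\prod(1-q^{2n})^{12}$ and reads off the coefficient of $q^{2n+1}$. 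So the cleaner route is: combine the first displayed identity with $E_6(m\tau)=\sum_k \bigl(1-504\sum_{d\mid k}d^5\bigr)q^{mk}$ for $m=1,2,4$ to express $\eta^{12}(2\tau)$ as $\tfrac{1}{144}\bigl(\tfrac{1}{63}E_6(\tau)-\tfrac{22}{21}E_6(2\tau)+\tfrac{128}{63}E_6(4\tau)-E_{6,1}(\tau,\tfrac12)\bigr)$, then take the coefficient of $q^{2n+1}$ on both sides. On the right only the $E_6(\tau)$-term and the $E_{6,1}$-term survive at odd exponent (the $E_6(2\tau)$ and $E_6(4\tau)$ terms are supported on even exponents), and the coefficient of $q^{2n+1}$ in $E_{6,1}(\tau,\tfrac12)$ is $\sum_{8n+4\ge r^2}(-1)^rH(5,8n+4-r^2)/\zeta(-9)$ together with the boundary contribution; using $H(5,0)=\zeta(-9)$ and that $8n+4$ is never an odd square, one gets exactly the stated formula for $b(2n+1)$ with the constant $\tfrac{11}{12}$ and the $\sigma_5$ term $-\tfrac{1}{18}\sigma_5(2n+1)$. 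For the even-exponent statement, the coefficient of $q^{2n}$ in $\eta^{12}(2\tau)$ is zero, so the same linear combination, evaluated at $q^{2n}$, yields the identity $\sum_{r\text{ odd},\ 0<r<2\sqrt{2n}}H(5,8n-r^2)=\tfrac{31}{33}\sigma_5(2n)+\sigma_5(n)-\tfrac{64}{33}\sigma_5(n/2)$, after collecting the $\sigma_5$ contributions of $E_6(\tau),E_6(2\tau),E_6(4\tau)$ at exponent $2n$ and noting that only odd $r$ contribute because $8n-r^2\equiv 0,1\bmod 4$ forces parity conditions matching $(-1)^5 N\equiv 0,1\bmod 4$.

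The main obstacle is bookkeeping rather than conceptual: one must correctly handle the $\zeta$-specialisation $z\mapsto\tfrac12$ of the Jacobi--Eisenstein series (keeping track of which $r$ survive, the sign $(-1)^r$, and the boundary terms $4nm=r^2$), and one must correctly normalise the relation between $H(5,N)$, $L(1-5,\chi_D)$, and the Fourier coefficients $e_{6,m}(n,r)$ via $\zeta(-9)=\zeta(3-2\cdot 6)$; a sign or a factor error there propagates into all the rational constants. A secondary care point is verifying that $E_{6,3}(\tau,\tfrac12)-E_{6,1}(\tau,\tfrac12)$ is genuinely cuspidal: this needs the behaviour at all three cusps of $\Gamma_0(4)$, which follows from Corollary \ref{modular vector} (the pullbacks at the other cusps are the $E_{6,m}(\tau,\cdot)$ at other characteristics, all holomorphic), so the difference, being Eisenstein-minus-Eisenstein with matching constant terms everywhere, lies in $S_6(\Gamma_0(4))=\CC\,\eta^{12}(2\tau)$. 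Everything else is a finite linear-algebra computation over $\QQ$ that can be carried out by comparing a handful of Fourier coefficients, exactly as in the proof of Lemma \ref{E12} and Theorem \ref{Ftheta8}.
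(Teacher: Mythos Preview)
Your overall approach matches the paper's exactly: put $E_{6,m}(\tau,\tfrac12)$ into $M_6(\Gamma_0(4))$ via Proposition \ref{pullback} and the remark after Corollary \ref{modular vector}, use the basis $\{E_6(\tau),E_6(2\tau),E_6(4\tau),\eta^{12}(2\tau)\}$, and compare a few Fourier coefficients. The derivation of $b(2n+1)$ is correct (and your check of the constants $\tfrac{11}{12}$ and $-\tfrac{1}{18}$ via $\zeta(-9)=-\tfrac{1}{132}$ is on target). The detour through cuspidality of $E_{6,3}(\tau,\tfrac12)-E_{6,1}(\tau,\tfrac12)$ is unnecessary --- once you have the four-dimensional basis, linear algebra does everything --- but it is not wrong.

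There is, however, a genuine error in your handling of the even-exponent identity. You claim that ``only odd $r$ contribute because $8n-r^2\equiv 0,1\bmod 4$ forces parity conditions matching $(-1)^5 N\equiv 0,1\bmod 4$''. This is false: for $N=8n-r^2$ the condition $N\equiv 0,3\bmod 4$ is satisfied for \emph{both} parities of $r$ (even $r$ gives $N\equiv 0$, odd $r$ gives $N\equiv 3$), so both contribute to the $q^{2n}$-coefficient of $E_{6,1}(\tau,\tfrac12)$. What the first displayed identity actually gives at $q^{2n}$ is the alternating sum $S_{\mathrm{even}}-S_{\mathrm{odd}}$, where $S_{\mathrm{par}}=\sum_{r\ \mathrm{of\ that\ parity}}H(5,8n-r^2)$. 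To isolate $S_{\mathrm{odd}}$ you need a second equation, and the natural one is $E_{6,1}(\tau,0)=E_6(\tau)$, whose $q^{2n}$-coefficient yields $S_{\mathrm{even}}+S_{\mathrm{odd}}=\tfrac{42}{11}\sigma_5(2n)$. Subtracting and halving (the sum over odd $r$ is symmetric in $r\mapsto -r$) then produces exactly $\tfrac{31}{33}\sigma_5(2n)+\sigma_5(n)-\tfrac{64}{33}\sigma_5(\tfrac n2)$. So the identity is recoverable, but not by the mechanism you stated.
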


In the same way we can analyse $E_{6,3}$. 
We have 
$$
E_{6,3}(\tau,\frac{1}3)\in M_{6}(\Gamma_1(3))=
\latt{E_6(\tau),\  E_6(3\tau),\  (\eta(\tau)\eta(3\tau))^6}_\CC
$$  
where  $(\eta(\tau)\eta(3\tau))^6$ is the first cusp form with respect to $\Gamma_1(3)$ and $\Gamma_0(3)$. 
Comparing the Fourier coefficients we get

\begin{proposition}\label{E66}
$$E_{6,3}(\tau,\tfrac{1}3)=-\tfrac{1}{728}E_6(\tau)+\tfrac{729}{728}E_6(3\tau)-\tfrac{28512}{793}(\eta(\tau)\eta(3\tau))^6.
$$
We put  $(\eta(\tau)\eta(3\tau))^6=\sum_{n\geq 1}c(n)q^n$.  Then
\begin{multline*}
c(n)=\tfrac{61}{3168}\sigma_5(n)-\tfrac{4941}{352}\sigma_5(\frac{n}{3})\\
+\tfrac{13}{864}\sum_{r^2\leq 12n}\cos(\frac{2\pi  r}{3}) \sum_{d\vert (n,r,3)}d^5 H(5,\frac{12n-r^2}{d^2}).
\end{multline*}
\end{proposition}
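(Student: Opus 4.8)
The plan is to mimic the strategy already used for Propositions \ref{E82} and \ref{E61}, now applied to the odd index $m=3$ with the specialisation $z=\tfrac13$ rather than $z=\tfrac12$. First I would invoke Proposition \ref{pullback} with $X={}^t(\tfrac13,0)$ (so $N=3$), which gives $E_{6,3}(\tau,\tfrac13)\in M_6(\Gamma_X)$; one checks $\Gamma_X\supseteq\Gamma_1(3)$, and since the character $\chi_X(M)=e^{2\pi i\cdot 3\cdot\det(MX,X)}$ is trivial on $\Gamma_1(3)$ (the relevant determinant lies in $\tfrac13\ZZ$ but is multiplied by $m=3$), in fact $E_{6,3}(\tau,\tfrac13)\in M_6(\Gamma_1(3))$. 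The space $M_6(\Gamma_1(3))$ is three-dimensional with basis $E_6(\tau)$, $E_6(3\tau)$ and the unique normalised cusp form $(\eta(\tau)\eta(3\tau))^6$ (this cusp form is well known; $\Gamma_0(3)=\Gamma_1(3)$ up to the diamond action, which is trivial in weight $6$, so $M_6(\Gamma_1(3))=M_6(\Gamma_0(3))$). Hence $E_{6,3}(\tau,\tfrac13)$ is determined by any three of its Fourier coefficients.

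Next I would compute the first few Fourier coefficients of $E_{6,3}(\tau,\tfrac13)$ explicitly. Using \eqref{Ekm} to write $E_{6,3}$ in terms of $E_{6,1}$ (note $3$ is prime and squarefree, so only the $d=1$ term survives: $E_{6,3}=3^{-5}(1+3^{-5})^{-1}E_{6,1}\lvert_{6,3}V_3$), then applying \eqref{Vl} to get the coefficients $e_{6,3}(n,r)$ from the $e_{6,1}$, and finally setting $\zeta=e^{2\pi i/3}$ so that $\zeta^r\mapsto\cos(\tfrac{2\pi r}{3})$ after pairing $r$ with $-r$. Writing $E_6(\tau)=1-504\sum\sigma_5(n)q^n$, I would match the $q^0$, $q^1$, $q^2$ coefficients of the right-hand side $\alpha E_6(\tau)+\beta E_6(3\tau)+\gamma(\eta(\tau)\eta(3\tau))^6$ against those of $E_{6,3}(\tau,\tfrac13)$; the $q^0$-coefficient forces $\alpha+\beta=1$, and two more linear equations pin down $\alpha=-\tfrac1{728}$, $\beta=\tfrac{729}{728}$, $\gamma=-\tfrac{28512}{793}$. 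The choice of denominators $728=3^6-1$ is a sanity check: $E_6(\tau)$ and $E_6(3\tau)$ agree mod small primes, so the Eisenstein part of any $M_6(\Gamma_0(3))$ form with integral-ish coefficients is forced to have this shape.

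For the coefficient formula for $c(n)$, I would read off $(\eta(\tau)\eta(3\tau))^6=\gamma^{-1}\bigl(E_{6,3}(\tau,\tfrac13)-\alpha E_6(\tau)-\beta E_6(3\tau)\bigr)$ coefficientwise. The $E_6$ contributions give the $\tfrac{61}{3168}\sigma_5(n)$ and $-\tfrac{4941}{352}\sigma_5(n/3)$ terms (here $504\alpha/\gamma$ and $504\beta/\gamma$ reduce to these fractions, with $\sigma_5(n/3)=0$ unless $3\mid n$), and the $E_{6,3}(\tau,\tfrac13)$ contribution, via \eqref{Vl} and $\zeta^r+\zeta^{-r}=2\cos(\tfrac{2\pi r}{3})$, produces the sum $\tfrac{13}{864}\sum_{r^2\le 12n}\cos(\tfrac{2\pi r}{3})\sum_{d\mid(n,r,3)}d^5H(5,\tfrac{12n-r^2}{d^2})$; the prefactor $\tfrac{13}{864}$ is $\gamma^{-1}$ times the normalising constant $3^{-5}(1+3^{-5})^{-1}\cdot\bigl(-\tfrac{1}{\zeta(-9)}\bigr)$ coming from $e_{6,1}(n,r)=H(5,4n-r^2)/\zeta(-9)$ combined with $\zeta(-9)=\tfrac1{132}$ and $\zeta(1-2r)|_{r=3}$.

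The main obstacle is bookkeeping the normalising constants correctly: tracking the factor $m^{-k+1}\prod_{p\mid m}(1+p^{-k+1})^{-1}$ from \eqref{Ekm} with $m=3,k=6$, the $d^{k-1}=d^5$ weights from $V_3$ in \eqref{Vl}, the value of $\zeta(1-2r)$ that converts $e_{6,1}$ from an $H(5,\cdot)$ to a normalised $H(5,\cdot)$, and the precise value of $\gamma$, so that all four numerical fractions ($-\tfrac1{728}$, $\tfrac{729}{728}$, $-\tfrac{28512}{793}$, and then $\tfrac{61}{3168}$, $\tfrac{4941}{352}$, $\tfrac{13}{864}$) come out exactly. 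Everything else is a finite, mechanical linear-algebra computation over $\QQ$, identical in spirit to Propositions \ref{E82}, \ref{E61}; the one genuinely structural input is the dimension and basis of $M_6(\Gamma_1(3))$, together with the identification of its newform cusp form as $(\eta(\tau)\eta(3\tau))^6$, both of which are classical.
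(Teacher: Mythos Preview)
Your proposal is correct and follows essentially the same approach as the paper: identify $E_{6,3}(\tau,\tfrac13)\in M_6(\Gamma_1(3))$ via Proposition~\ref{pullback}, use the three-dimensional basis $E_6(\tau)$, $E_6(3\tau)$, $(\eta(\tau)\eta(3\tau))^6$, and solve for the coefficients by matching the first few Fourier coefficients. The paper gives no more detail than this, so your write-up is in fact more explicit. Two small bookkeeping slips to fix before you finalise: $\zeta(-9)=-\tfrac{1}{132}$ (not $+\tfrac{1}{132}$), and the stray ``$\zeta(1-2r)\vert_{r=3}$'' at the end is off---for $k=6$ the relevant value is $\zeta(3-2k)=\zeta(-9)$, corresponding to $H(5,\cdot)$.
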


\subsection{An explicit formula for $\vartheta^{16}(\tau,z)$}

In \S 2.1 we give the formulas for the generators of the graded ring
$J_{0,*}^{{\rm w},\ZZ}$. We can use these generators for the construction of cusp corrections 
in some explicit formulas similar to the formulas of Theorem \ref{Ftheta8}.

\begin{theorem}The following identities are true
\begin{align*}
(12\wp)^2\vartheta^8=&\eta^{12}\vartheta^4\phi_{0,1}^2=E_{8,1}(2z)-E_{8,4}
+\tfrac{1449}{86}\,\eta^{12}\vartheta^4\phi_{0,2},\\
(12\wp)^3\vartheta^8=&\eta^{18}\vartheta^2\phi_{0,1}^3=E_4E_{6,1}(2z)-E_6E_{4,4}+36\Delta(\phi_{0,1}\phi_{0,2} -2\phi_{0,3}),\\
(12\wp)^4\vartheta^8=&\Delta\phi_{0,1}^4=E_6E_{6,1}(2z) - E_8E_{4,4}+
48\Delta(\phi_{0,1}^2\phi_{0,2}-9\phi_{0,1}\phi_{0,3}+12\phi_{0,4}),
\end{align*}
\begin{multline}\label{theta16}
\vartheta^{16}=E_{8,2}(2z)-E_{8,8}+\\
\eta^{12}\vartheta^4\left(\tfrac{73}{11008}\phi_{0,1}\phi_{0,2}\phi_{0,3}-
\tfrac{45549}{2752}\phi_{0,3}^2+\tfrac{20713}{1376}\phi_{0,2}\phi_{0,4} \right).
\end{multline}
\end{theorem}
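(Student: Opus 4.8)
The plan is to read each identity as an equality inside a single finite-dimensional space of \emph{holomorphic} Jacobi forms and then, exactly as in the proof of Theorem \ref{Ftheta8}, to reduce it to the comparison of finitely many Fourier coefficients; the one new feature is that the relevant cusp subspaces $J_{8,4}^{\rm cusp}$, $J_{10,4}^{\rm cusp}$, $J_{12,4}^{\rm cusp}$, $J_{8,8}^{\rm cusp}$ are now non-zero, so one must pin down not only the Eisenstein part but also an explicit cusp correction. Concretely, $(12\wp)^2\vartheta^8$, $(12\wp)^3\vartheta^8$, $(12\wp)^4\vartheta^8$ and $\vartheta^{16}$ will be shown to be holomorphic Jacobi forms of weights $8,10,12,8$ and indices $4,4,4,8$, and each of the three summands on the corresponding right-hand side will be shown to be a holomorphic Jacobi form of the same weight and index; the identities then follow from linear algebra over $\QQ$.

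First I would establish membership of the left-hand sides. Using the relation $\phi_{0,1}=12\wp\vartheta^2/\eta^6$ recorded in the proof of Theorem \ref{Ftheta8}, one gets $(12\wp)^m\vartheta^8=(12\wp\vartheta^2)^m\vartheta^{8-2m}=\eta^{6m}\phi_{0,1}^m\vartheta^{8-2m}$; for $m=2,3,4$ the exponent $8-2m\ge 0$, so $\vartheta^{8-2m}$ is a genuine holomorphic Jacobi form, while $\eta^{6m}\phi_{0,1}^m=(\eta^6\phi_{0,1})^m$ is holomorphic by the Remark of \S 2.1, and for $m=4$ one has $\eta^{24}=\Delta$, which produces $\Delta\phi_{0,1}^4$. (Equivalently: the order-$8$ zero of $\vartheta^8$ along $\ZZ\tau+\ZZ$ absorbs the order-$2m$ pole of $(12\wp)^m$.) Similarly $\eta^{18}\vartheta^2\phi_{0,1}^3=(12\wp)^3\vartheta^6\cdot\vartheta^2=(12\wp)^3\vartheta^8$, which identifies the middle expression in the second line. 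The form $\vartheta^{16}$ is manifestly a holomorphic Jacobi form of weight $8$ and index $8$.

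Next, the right-hand sides. The Eisenstein-type summands lie in the correct spaces: the substitution $z\mapsto dz$ multiplies the index by $d^2$, so $E_{k,1}(\tau,2z)\in J_{k,4}$, $E_{k,2}(\tau,2z)\in J_{k,8}$, and products such as $E_4E_{6,1}(2z)$, $E_6E_{4,4}\in J_{10,4}$ and $E_6E_{6,1}(2z)$, $E_8E_{4,4}\in J_{12,4}$; moreover each displayed Eisenstein combination has vanishing $(0,0)$-Fourier coefficient, matching the left-hand side. The correction terms are Jacobi cusp forms: $\eta^{12}\vartheta^4$, $\eta^{18}\vartheta^2$ and $\Delta=\eta^{24}$ are Jacobi cusp forms of weights $8,10,12$ and indices $2,1,0$ (for the first two see \S 4.1), and multiplying any one of them by a weight-$0$ weak Jacobi form — a monomial in $\phi_{0,1},\dots,\phi_{0,4}$ of the appropriate index — yields a holomorphic Jacobi cusp form of the wanted weight and index: holomorphy is checked as in the Remark of \S 2.1 and \cite[Example 1.25]{GN} together with order-of-vanishing bounds for $\vartheta$ and $\wp$, and the vanishing of $\eta^a$ (or $\vartheta^b$) at the cusp forces the $q$-expansion to start at positive powers. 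With all summands placed in the single space $J_{k,m}$, the identities follow by comparing enough Fourier coefficients, all of which are explicitly computable from the Jacobi triple product for $\vartheta$, the product for $\eta$, the Laurent expansion of $\wp$ (or of $\phi_{0,1}=12\wp\vartheta^2/\eta^6$), the $q$-expansions of $\phi_{0,j}$ listed in \S 2.1, and the tables and formulas of \S 2.3 and Lemma \ref{E12} for $E_{k,m}$.

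The main obstacle is structural bookkeeping rather than any single hard estimate: one must compute $\dim J_{k,m}$ and $\dim J_{k,m}^{\rm cusp}$ for $(k,m)=(8,4),(10,4),(12,4),(8,8)$ and exhibit explicit bases — in particular the correct spanning family of cusp forms, i.e. which monomials in the $\phi_{0,j}$, multiplied by $\eta^{12}\vartheta^4$, $\eta^{18}\vartheta^2$ or $\Delta$, give a basis of the cusp subspace — so that the displayed correction provably exhausts the whole cusp part and the rational coefficients $\tfrac{1449}{86},36,48,\tfrac{73}{11008},\tfrac{45549}{2752},\tfrac{20713}{1376}$ are uniquely determined by the Fourier comparison. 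A secondary point needing care is checking that each product $\eta^a\vartheta^b\cdot(\text{monomial in }\phi_{0,j})$ is genuinely holomorphic (indeed cuspidal) and not merely weak, which is again handled by the Remark of \S 2.1. Once this linear-algebraic skeleton is in place, verifying the four identities is a finite, if tedious, computation with explicit $q$-expansions.
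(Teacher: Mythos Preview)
Your proposal is correct and follows essentially the same route as the paper: place both sides in the appropriate $J_{k,m}$, invoke the known dimensions $\dim J_{8,4}^{\rm cusp}=1$, $\dim J_{10,4}^{\rm cusp}=2$, $\dim J_{12,4}^{\rm cusp}=\dim J_{8,8}^{\rm cusp}=3$, build explicit bases of the cusp subspaces from $\eta^{12}\vartheta^4$, $\eta^{18}\vartheta^2$, $\Delta$ times monomials in the $\phi_{0,j}$ (holomorphy via the Remark of \S 2.1), and then match finitely many Fourier coefficients. The paper adds one detail you leave implicit: the linear independence of the chosen cusp generators is argued from the algebraic independence of $\phi_{0,1},\phi_{0,2},\phi_{0,3}$ together with the relation $4\phi_{0,4}=\phi_{0,1}\phi_{0,3}-\phi_{0,2}^2$, and the final coefficient match is done with GP~PARI.
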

\begin{proof}
We know (see \cite{EZ}) that 
${\rm dim}\,J_{k,4}^{\rm Eis}=2$ for any even $k\ge 4$ and ${\rm dim}\,J_{8,4}^{\rm cusp}=1$, 
${\rm dim}\,J_{10,4}^{\rm cusp}=2$, 
and ${\rm dim}\,J_{12,4}^{\rm cusp}={\rm dim}\,J_{8,8}^{\rm cusp}=3$. 
Using the conditions of holomorphicity  (\ref{eta6phi01}) 
for the four generators 
$\phi_{0,1}$, ..., $\phi_{0,4}$ of weight $0$, we can find a  basis for any  of these spaces of cusp forms:
\begin{align*}
&J_{8,4}^{\rm cusp}= \latt{ \eta^{12}\vartheta^4\phi_{0,2}}_{\CC},\\
&J_{10,4}^{\rm cusp}= \latt{ \eta^{18}\vartheta^2\phi_{0,1}\phi_{0,2},\; \eta^{18}\vartheta^2\phi_{0,3}}_{\CC},\\
&J_{12,4}^{\rm cusp}= \latt{ \Delta\phi_{0,1}^2\phi_{0,2},\; \Delta\phi_{0,1}\phi_{0,3},\;\Delta\phi_{0,4}}_{\CC},\\
&J_{8,8}^{\rm cusp}= \latt{\eta^{12}\vartheta^4\phi_{0,1}\phi_{0,2}\phi_{0,3},\;\eta^{12}\vartheta^4\phi_{0,3}^2,\;\eta^{12}\vartheta^4\phi_{0,2}\phi_{0,4}}_{\CC}.
\end{align*}
Firstly, the above functions are Jacobi cusp forms by (\ref{eta6phi01}).
We know that the functions $\phi_{0,1}, \phi_{0,2}, \phi_{0,3}$ are algebraically independent over $\CC$ and $4\phi_{0,4}=\phi_{0,1}\phi_{0,3}-\phi_{0,2}^2$. 
Thus we get a basis in all cases above.

We obtain  the  identities of the theorem by comparing the first Fourier coefficients
with GP PARI.
\end{proof}
\smallskip

{\bf Applications to $\theta_{10}^{16}$ and $\theta_{01}^{16}$.}
The formula (\ref{theta16}) for $\vartheta^{16}\in J_{8,8}$ looks complicated but it 
immediately  gives explicit formulas for the $16$-powers of the theta-constants of order $2$.
According to \cite{G1} the special values  of three generators
of weight $0$ at $z=\frac 12$ are constants and $\phi_{0,1}(\tau,\frac{1}{2})$ is a modular function of weight $0$:
$$
\phi_{0,2}(\tau,\frac{1}{2})=2,\quad
\phi_{0,3}(\tau,\frac{1}{2})=0,\quad
\phi_{0,4}(\tau,\frac{1}{2})=-1,
$$
$$
\phi_{0,2}(\tau,\frac{\tau +1}{2})=-2q^{-\frac{1}{2}},\ 
\phi_{0,3}(\tau,\frac{\tau +1}{2})=0,\ 
\phi_{0,4}(\tau,\frac{\tau +1}{2})=-q^{-1},
$$
$$
\phi_{0,1}(\tau,\frac{1}{2})=4\left(\frac{\theta_{01}^2}{\theta_{00}^2} +
\frac{\theta_{00}^2}{\theta_{01}^2}  \right),\quad
\phi_{0,1}(\tau,\frac{\tau +1}{2})=4q^{-\frac{1}{4}} \left(\frac{\theta_{10}^2}
{\theta_{01}^2} -\frac{\theta_{01}^2}{\theta_{10}^2}  \right).
$$
By (\ref{theta16}) and Proposition \ref{E82}, we obtain three different formulas
for $\theta^{16}_{10}$:
\begin{align*}
\theta^{16}_{10}(\tau)&=E_{8}(\tau)-E_{8,8}(\tau, \tfrac{1}2)-
\tfrac{20713}{688}\,\eta^{12}(\tau)\theta_{10}^4(\tau),\\
\theta^{16}_{10}(\tau)&=\tfrac{256}{255}\left( E_{8}(\tau)-E_{8}(2\tau) \right)-
\tfrac{512}{17}\eta^{12}(\tau)\theta_{10}^4(\tau),\\
\theta^{16}_{10}(\tau)&=\tfrac{1952}{2025}E_8(\tau)+\tfrac{18688}{2025}E_8(2\tau)-\tfrac{1376}{135}E_{8,2}(\tau, \frac{1}2).
\end{align*}
Therefore we have 
$$
\delta_{16}(n)=\tfrac{61}{8640}\sigma_{7}(n+2)-\tfrac{1}{829440}
\sum_{8(n+2)>r^2}(-1)^r\frac{H(7,8(n+2)-r^2)}{\zeta(-13)}
\quad (n\ {\rm is\   odd}).
$$
In the same way we get 
$$
\theta^{16}_{01}(2\tau)=-\tfrac{13}{2025}E_8(\tau)+\tfrac{3328}{2025}E_8(2\tau)-\tfrac{86}{135}E_{8,2}(\tau, \frac{1}2)
$$
and
$$
r_{16}(n)=\tfrac{416}{135}\sigma_{7}(n)+
\tfrac{2}{405}\sum_{8n>r^2}(-1)^r\frac{H(7,8n-r^2)}{\zeta(-13)}
\quad (n\ {\rm is\  odd}).
$$

\subsection{An explicit formula for $\vartheta^{24}(\tau,z)$}
We use the same method as above.
We know (see \cite{EZ}) that 
$$ 
{\rm dim}\, J_{12,12}^{\rm Eis}=2,\qquad 
{\rm dim}\,J_{12,12}^{\rm cusp}=9.
$$
A system of generators  of $J_{12,12}^{\rm cusp}$ 
can be constructed by basic weak  Jacobi forms 
of weight zero:
\begin{align*}
&\Delta\phi_{0,1}\phi_{0,2}\phi_{0,3}^3,& &\Delta\phi_{0,1}\phi_{0,2}^4\phi_{0,3},& &\Delta\phi_{0,2}^3\phi_{0,3}^2,\\
&\Delta\phi_{0,2}^6,& &\Delta\phi_{0,3}^4,& &\Delta\phi_{0,1}^2\phi_{0,3}^2\phi_{0,4},\\
&\Delta\phi_{0,1}\phi_{0,2}^2\phi_{0,3}\phi_{0,4},& &\Delta\phi_{0,1}^2\phi_{0,2}\phi_{0,4}^2,& &\eta^{12}\vartheta^{12}\phi_{0,1}\phi_{0,2}\phi_{0,3}.
\end{align*}
First we can check that the above functions are Jacobi cusp forms
(see (\ref{eta6phi01})). 
We can analyse their first Fourier coefficients
\begin{align*}
&\eta^{12}\vartheta^{12}\phi_{0,1}\phi_{0,2}\phi_{0,3}=q^2(\zeta^{\pm 9}+\cdots)+q^3(-\zeta^{\pm 11}+\cdots)+\cdots,\\
&\Delta\phi_{0,1}^2\phi_{0,2}\phi_{0,4}^2=q(*\zeta^{\pm 6}+\cdots)+q^2(*\zeta^{\pm 8}+\cdots)+q^3(\zeta^{\pm 11}+\cdots)+\cdots.
\end{align*}
The rest of the functions have a Fourier expansion of the type
$$
q(*\zeta^{\pm 6}+\cdots)+q^2(*\zeta^{\pm 8}+\cdots)+q^3(*\zeta^{\pm 10}+\cdots)+\cdots.
$$
Their Fourier coefficients at $q\zeta^6$, $q\zeta^5$, $q\zeta^4$, 
$q\zeta^3$, $q\zeta^2$, $q\zeta^1$ and  $q^2\zeta^8$ form a matrix $M$
with det\,$M\neq 0$.

The Jacobi--Eisenstein series of weight $12$ and index $12$ has rational Fourier
coefficients with rather large denominators (see Lemma \ref{E12}).
Thus we use the Eisenstein series with integral coefficients.
By GP PARI, we obtain
\begin{multline*}
\vartheta^{24}(\tau,z)=E_4^2E_{4,3}(\tau,2z)-E_{4,4}^3
-24\Delta\phi_{0,1}^2\phi_{0,2}\phi_{0,4}^2+36\Delta\phi_{0,2}^6\\
+\Delta\phi_{0,3}\bigl(-38\phi_{0,1}\phi_{0,2}^4
-477\phi_{0,1}\phi_{0,2}\phi_{0,3}^2
+486\phi_{0,2}^3\phi_{0,3}
+702\phi_{0,3}^3\\
+55\phi_{0,1}^2\phi_{0,3}\phi_{0,4}
+160\phi_{0,1}\phi_{0,2}^2\phi_{0,4}\bigr).
\end{multline*}
We note that in the right hand side of the last formula only two cusp corrections do
not vanish for $z=\frac{1}2$! Thus  we have
\begin{equation*}
\theta_{10}^{24}(\tau)=E_4^3(\tau)-E_{4,4}^3(\tau,\frac{1}{2})
-48\Delta(\tau)\phi_{0,1}^2(\tau,\frac{1}{2})+2^8\cdot 3^2\Delta(\tau).
\end{equation*}

By using formula for $\phi_{0,1}(\tau,\frac{1}{2})$ and the classical relation $2\eta^3=\theta_{00}\theta_{01}\theta_{10}$, we can simplify the above formula
\begin{equation}
\theta_{10}^{24}=E_4^3-E_{4,4}^3(\tau,\frac{1}{2})
-3\cdot 2^4\,\eta^{12} \theta_{10}^4E_4.
\end{equation}
Hence we get an expression of $\theta_{10}^{24}$ in terms of $E_{8,m}(\tau,\frac 1{2})$:
\begin{gather}
\theta_{10}^{24}=E_4^3-E_{4,4}^3(\tau,\tfrac{1}{2})
-\tfrac{688}{45}E_4 \left(E_{8,2}(\tau,\tfrac{1}{2})
-E_{8,4}(\tau,\tfrac{1}{2})   \right),
\end{gather}
since we have $\eta^{12} \theta_{10}^4=\tfrac{43}{135}\left(E_{8,2}(\tau,\tfrac{1}{2})-E_{8,4}(\tau,\tfrac{1}{2})   \right)$ by Proposition 4.1.

Valery Gritsenko \par
\smallskip

Laboratoire Paul Painlev\'e\par
Universit\'e de Lille 1 and IUF, France \par
\smallskip

\par
National Research University Higher School of Economics
\par
Russian Federation
\par
\smallskip
Valery.Gritsenko@math.univ-lille1.fr
\vskip0.5cm

\vskip5pt

Haowu Wang \par
\smallskip

Le Laboratoire d'Excellence Centre Europ\'een pour les Math\'ematiques,
\par
la Physique et leurs interactions (CEMPI)
\par
Universit\'e de Lille 1, France
\par
\smallskip
haowu.wang@math.univ-lille1.fr
\end{document}